\documentclass{amsart}
\usepackage[russian]{babel}
\numberwithin{equation}{section}

\textwidth 135mm \textheight 220mm
\oddsidemargin 10mm \evensidemargin 10mm \baselineskip+6pt


\pagestyle{myheadings}

\thispagestyle{empty}

\markboth{\small{Ravshan Ashurov}}{\small{Time-dependent source identification problem for a fractional Schr\"odinger
equation with the Riemann-Liouville derivative}}

\newtheorem{thm}{Theorem}[section]
\newtheorem{lem}[thm]{Lemma}
\newtheorem{cor}[thm]{Corollary}

\newtheorem{defin}[thm]{Definition}

\begin{document}
\title{Time-dependent source identification problem for a fractional Schr\"odinger
equation with the Riemann-Liouville derivative}

\author{Ravshan Ashurov}
\author{Marjona Shakarova}
\address{Institute of Mathematics, Uzbekistan Academy of Science,
Tashkent, Student Town str. 100174} \email{ashurovr@gmail.com}
\curraddr{National University of Uzbekistan named after Mirzo Ulugbek,  Tashkent, Student Town str. 100174} \email{shakarova2104@gmail.ru}

\small

\title[Time-dependent source identification problem ...] { Time-dependent source identification problem for a fractional Schrödinger
equation with the Riemann-Liouville derivative }

\begin{abstract} 
 The Schr\"odinger equation
$i \partial_t^\rho u(x,t)-u_{xx}(x,t) = p(t)q(x) + f(x,t)$ ( $0<t\leq T, \, 0<\rho<1$), with the Riemann-Liouville derivative is considered.   An inverse problem is investigated  in which, along with $u(x,t)$, also a time-dependent  factor  $p(t)$  of the source function is unknown. To solve this inverse problem, we take the additional condition $ B [u (\cdot,t)] = \psi (t) $ with an arbitrary bounded linear functional $ B $. Existence and uniqueness theorem for
the solution to the problem under consideration is proved. Inequalities of stability are obtained. The applied method allows us to study a similar problem by taking instead of $d^2/dx^2$ an arbitrary elliptic differential operator $A(x, D)$, having a compact inverse.

\vskip 0.3cm \noindent {\it AMS 2000 Mathematics Subject
Classifications} :
Primary 35R11; Secondary 34A12.\\
{\it Key words}: Schr\"odinger type equations, the  Riemann-Liouville derivatives, time-dependent source identification problem.
\end{abstract}

\maketitle

\section{Introduction}

The fractional integration of order $ \sigma <0 $ of function $ h (t) $ defined on $ [0, \infty) $ has the form (see e.g. \cite{Pskhu}, p.14, \cite{SU}, Chapter 3)
\[
J_t^\sigma h(t)=\frac{1}{\Gamma
(-\sigma)}\int\limits_0^t\frac{h(\xi)}{(t-\xi)^{\sigma+1}} d\xi,
\quad t>0,
\]
provided the right-hand side exists. Here $\Gamma(\sigma)$ is
Euler's gamma function. Using this definition one can define the
 Riemann-Liouville fractional derivative of order $\rho$:

$$
\partial_t^\rho h(t)= \frac{d}{dt}J_t^{\rho-1} h(t).
$$

Note that if $\rho=1$, then the fractional derivative coincides with
the ordinary classical derivative of the first order: $\partial_t h(t)= (d/dt) h(t)$.

Let $\rho\in(0,1) $ be a fixed number and $\Omega =(0,\pi) \times (0, T]$.
Consider the following initial-boundary value problem for the Shr\"odinger equation
\begin{equation}\label{prob1}
\left\{
\begin{aligned}
&i\partial_t^\rho u(x,t) - u_{xx}(x,t) =p(t)q(x)+ f(x,t),\quad (x,t)\in \Omega;\\
&u(0,t)=u(\pi, t)=0, \quad 0\leq t\leq T;\\
&\lim\limits_{t\rightarrow 0}J_t^{\rho-1}u(x,t)=\varphi(x), \quad 0\leq x\leq \pi,
\end{aligned}
\right.
\end{equation}
where $t^{1-\rho}p(t)$, $t^{1-\rho}f(x,t)$ and $\varphi(x), q(x)$ are continuous functions in the closed domain $\overline{\Omega}$. This problem is also called the \textit{forward problem}.

If $p(t)$ is a known function, then under certain conditions on the given functions a solution to problem
(\ref{prob1}) exists and it is unique (see, e.g., \cite{AOLob}).

We note the following property of the Riemann-Liouville integrals, which simplifies the verification of the initial condition in problem (\ref{prob1}) (see e.g. \cite{Pskhu}, p.104):

\begin{equation} \label{property}
	\lim\limits_{t\rightarrow +0}J_t^{\alpha-1} h(t) = \Gamma
	(\alpha)\lim\limits_{t\rightarrow +0} t^{1-\alpha} h(t).
\end{equation}
From here, in particular, it follows that the solution of the forward problem can have a
singularity at zero $t = 0$ of order $t^{\rho-1}$.

Let $C[0, l]$ be the set of continuous functions defined on $[0,l]$ with the standard max-norm $||\cdot||_{C[0,l]}$. The purpose
of this paper is not only to find a solution $u(x,t)$, but also to determine
the time-dependent part $p(t)$ of the source function. To solve this time-dependent source identification problem
 one needs an extra
condition. Following the papers of A. Ashyralyev et al. \cite{Ashyr1}-\cite{Ashyr3} we consider the additional condition in a rather general form:
\begin{equation}\label{ad}
B[u(\cdot,t)]=\psi(t), \quad 0\leq t \leq T,
\end{equation}
where $B: C[0, \pi]\rightarrow R$ is a given bounded linear functional: $||B[h(\cdot, t)]||_{C[0,T]}\leq b||h(x,t)||_{C(\overline{\Omega})}$, and $\psi(t)$ is a given continuous function. For example, as the functional $B$ one can take  $B[u(\cdot,t)]=u(x_0, t)$, $x_0\in [0, \pi]$, or $B[u(\cdot,t)]=\int_{0}^{\pi} u(x,t) dx$, or a linear combination of these two functionals. 

We call the initial-boundary value problem (\ref{prob1}) together with additional condition (\ref{ad})
 \emph{the inverse problem}.

 When solving the inverse problem, we will investigate the Cauchy and initial-boundary value problems for various differential equations. In this case, by the solution of the problem we mean the classical solution, i.e. we will assume that all derivatives and functions involved in the equation are continuous with respect to the variable $x$ and $t$ in a open set. As an example, let us give the definition of the solution to the inverse problem.

\begin{defin}\label{def} A pair of functions $\{u(x,t), p(t)\}$ with the properties
\begin{enumerate}
	\item
	$\partial_t^\rho u(x,t), u_{xx}(x,t)\in C(\Omega)$,
	\item$t^{1-\rho}u(x,t)\in C(\overline{\Omega})$,
	\item
$t^{1-\rho}p(t)\in C[0,T]$,
\end{enumerate}
and satisfying conditions
(\ref{prob1}), (\ref{ad})  is called \textbf{the
 solution} of the inverse problem.
\end{defin}

Note that condition 3 in this definition is taken in order to cover a wider class of functions, as function $p(t)$. In this regard, it should be noted that, to the best of our knowledge,  the time-dependent source identification problem for equations with the Riemann-Liouville derivative is being studied for the first time.

Taking into account the boundary conditions in problem (\ref{prob1}), it is convenient for us to introduce the H\"older classes as follows. Let $\omega_g(\delta)$ be the modulus of continuity of function $g(x)\in C[0, \pi]$, i.e.
\[
\omega_g(\delta)=\sup\limits_{|x_1-x_2|\leq\delta}
|g(x_1)-g(x_2)|, \quad x_1, x_2\in [0,\pi].
\]
If $\omega_g(\delta)\leq C \delta^a$ is true for some $a>0$, where $C$ does not depend on $\delta$ and $g(0)=g(\pi)=0$, then $g(x)$ is said to belong to the H\"older class $C^a[0,\pi]$. Let us denote the smallest of all such constants $C$ by $||g||_{C^a[0,\pi]}$. Similarly, if the continuous function $h(x, t)$ is defined on $[0,\pi]\times[0, T]$, then the value
\[
\omega_h(\delta ;t)=\sup\limits_{|x_1-x_2|\leq\delta} |h(x_1,
t)-h(x_2, t)|, \quad x_1, x_2\in [0,\pi]
\]
is the modulus of continuity of function $h(x, t)$ with respect to the variable
$x$. In case when $\omega_h(\delta; t)\leq C \delta^a$, where
$C$ does not depend on $t$ and $\delta$ and $h(0,t)=h(\pi,t)=0,\,\, t\in [0,T]$, then we say that
$h(x, t)$ belongs to the H\"older class $C_x^a(\overline{\Omega})$. Similarly, we denote the smallest constant $C$ by  $||h||_{C_x^a(\overline{\Omega})}$.

Let $C_{2,x}^a(\overline{\Omega})$ denote the class of functions $h(x,t)$ such that $h_{xx}(x,t)\in C_x^a(\overline{\Omega})$  and  $h(0,t)=h(\pi,t)=0,\, t\in [0,T]$. Note that condition $h_{xx}(x,t)\in C_x^a(\overline{\Omega})$  implies that $h_{xx}(0,t)=h_{xx}(\pi,t)=0,\,\, t\in [0,T]$. For a function of one variable $g(x)$, we introduce classes $C_{2}^a[0, \pi]$ in a similar way.

\begin{thm}\label{main} Let $a>\frac{1}{2}$ and the following conditions be satisfied
	
	\begin{enumerate}
		\item
		$t^{1-\rho}f(x,t)\in C^a_x(\overline{\Omega})$,
		\item$\varphi \in C^a[0, \pi]$,
		\item
		$t^{1-\rho}\psi(t),\,\, t^{1-\rho}\partial_t^\rho\psi(t) \in C[0,T]$,
		\item$q \in C_2^a[0, \pi],\,\, B[q(x)]\neq 0$.
		
	\end{enumerate}
	Then the inverse problem has a unique solution $\{u(x,t), p(t)\}$.

\end{thm}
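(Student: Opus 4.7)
The plan is to apply the Fourier method in the Dirichlet eigenbasis $\{\sin(kx)\}_{k\geq 1}$ of $-d^2/dx^2$ on $[0,\pi]$. Writing $u(x,t)=\sum_{k\geq 1} u_k(t)\sin(kx)$ and expanding $\varphi, q, f$ analogously, problem (\ref{prob1}) decouples into the fractional ODEs
\[
i\partial_t^\rho u_k(t)+k^2 u_k(t)=p(t)q_k+f_k(t),\qquad \lim_{t\to 0^+}J_t^{\rho-1}u_k(t)=\varphi_k,
\]
whose explicit solution via the Mittag-Leffler function reads
\[
u_k(t)=\varphi_k t^{\rho-1}E_{\rho,\rho}(ik^2 t^\rho)-i\int_0^t(t-s)^{\rho-1}E_{\rho,\rho}\bigl(ik^2(t-s)^\rho\bigr)\bigl[p(s)q_k+f_k(s)\bigr]ds.
\]
Since $\rho<1$, the argument $ik^2 t^\rho$ lies outside the sector $|\arg z|\leq\rho\pi/2$, so the standard bound $|E_{\rho,\rho}(ik^2 t^\rho)|\leq C/(1+k^2 t^\rho)$ applies. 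This estimate is the workhorse for every subsequent majorant.

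The second step is to determine $p(t)$. Applying the bounded functional $B$ to the PDE and invoking (\ref{ad}) yields
\[
p(t)=\frac{1}{B[q]}\Bigl(i\partial_t^\rho\psi(t)-B[u_{xx}(\cdot,t)]-B[f(\cdot,t)]\Bigr),
\]
the denominator being nonzero by hypothesis (4) and the right-hand side meaningful by hypotheses (1) and (3). Inserting the explicit series for $u_{xx}$ transforms this identity into a linear Volterra integral equation of the second kind for the weighted unknown $t^{1-\rho}p(t)\in C[0,T]$, whose kernel exhibits the integrable singularity $(t-s)^{\rho-1}$. A Banach fixed-point argument on this weighted space (equivalently, a fractional Gronwall iteration) produces a unique solution $p$.

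Once $p$ is obtained, I would substitute it back into the Fourier series for $u(x,t)$ and verify the three conditions of Definition \ref{def}: the series for $t^{1-\rho}u$ must converge absolutely and uniformly on $\overline{\Omega}$, and the series for $u_{xx}$ and $\partial_t^\rho u$ on $\Omega$. The Hölder hypotheses supply the Bernstein-type Fourier decay needed here; for $a>1/2$ one has $\sum|\varphi_k|<\infty$, $\sum\sup_{t}t^{1-\rho}|f_k(t)|<\infty$, and $\sum k^2|q_k|<\infty$ (the extra factor $k^2$ being absorbed by $q_{xx}\in C^a$ in the class $C_2^a$). Multiplied against the Mittag-Leffler decay, these majorants dominate every required series.

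The principal obstacle I anticipate is the $u_{xx}$-series: the factor $k^2$ produced by termwise differentiation must be absorbed by the joint Fourier/Mittag-Leffler decay uniformly in $t$, while still permitting the admissible $t^{\rho-1}$ singularity at the origin suggested by (\ref{property}). The threshold $a>1/2$ is precisely the Bernstein threshold at which this compensation succeeds, so the hypothesis cannot be relaxed in this scheme. Uniqueness then comes essentially for free: the difference of two hypothetical solutions satisfies the homogeneous counterpart of the Volterra equation for $p$, forcing $p\equiv 0$, and the Fourier representation then gives $u\equiv 0$.
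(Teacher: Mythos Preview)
Your overall scheme (Fourier expansion in $\{\sin kx\}$, Mittag--Leffler bounds, reduction to a weakly singular Volterra equation, Banach fixed point, Bernstein-type decay under $a>1/2$) matches the paper's toolkit exactly. The substantive difference is \emph{which} unknown carries the Volterra equation, and this is where your sketch runs into a genuine obstacle under the stated hypothesis $\varphi\in C^a[0,\pi]$.

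You obtain $p(t)=\dfrac{1}{B[q]}\bigl(i\partial_t^\rho\psi(t)-B[u_{xx}(\cdot,t)]-B[f(\cdot,t)]\bigr)$ and then insert the Duhamel series for $u_{xx}$. The free part of the resulting equation (the piece not involving $p$) contains the contribution of the initial datum,
\[
t^{1-\rho}\cdot\Bigl(-\sum_{k\ge1}k^2\varphi_k\,t^{\rho-1}E_{\rho,\rho}(ik^2t^\rho)\,B[\sin kx]\Bigr)
= -\sum_{k\ge1}k^2\varphi_k\,E_{\rho,\rho}(ik^2t^\rho)\,B[\sin kx].
\]
As $t\to0^+$ each factor $E_{\rho,\rho}(ik^2t^\rho)\to 1/\Gamma(\rho)$, so continuity at $t=0$ would force $\sum_k k^2|\varphi_k|\,|B[\sin kx]|<\infty$. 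For a generic bounded functional (say $B[h]=h(x_0)$, so $|B[\sin kx]|=|\sin kx_0|$ with no decay) this amounts to $\sum_k k^2|\varphi_k|<\infty$, i.e.\ essentially $\varphi''\in C^a$. Bernstein's theorem under merely $\varphi\in C^a$, $a>1/2$, gives only $\sum|\varphi_k|<\infty$; the extra $k^2$ cannot be absorbed by the Mittag--Leffler decay uniformly down to $t=0$. Hence your free term does not lie in the weighted space $t^{1-\rho}\cdot C[0,T]$, and the fixed-point argument for $t^{1-\rho}p\in C[0,T]$ does not launch.

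The paper circumvents this by the substitution (borrowed from Ashyralyev et al.)
\[
u(x,t)=\omega(x,t)-i\mu(t)q(x),\qquad \partial_t^\rho\mu=p,\ \ \lim_{t\to0}J_t^{\rho-1}\mu=0,
\]
which replaces the unknown source $p(t)q(x)$ by $-i\mu(t)q''(x)$ on the right-hand side and, via $B[u]=\psi$, expresses $\mu(t)$ through $B[\omega(\cdot,t)]$. The resulting Volterra equation is for $\omega$ itself (equation~(\ref{VE2}) in the paper), and its free term contains the initial datum only through $t^{\rho-1}E_\rho(it^\rho A)\varphi$, with \emph{no} spatial derivative on $\varphi$. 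Estimate~(\ref{E}) then shows $t^{1-\rho}F\in C(\overline{\Omega})$ using only $\varphi\in C^a$, so the fixed-point argument (Lemma~\ref{VElem}) goes through at the level of $\omega$; $p$ and $u$ are recovered afterwards by (\ref{u})--(\ref{mu}). In short, the auxiliary function $\omega$ is not cosmetic: it is precisely the device that keeps the two extra $x$-derivatives off $\varphi$ in the integral equation, and it is the missing idea in your plan. Your route would succeed verbatim under the stronger assumption $\varphi\in C_2^a[0,\pi]$ (the hypothesis used for the stability Theorem~\ref{estimate}), but not under the hypothesis of Theorem~\ref{main} as stated.
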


Everywhere below we denote by $a$ an arbitrary number greater than $1/2$: $a>1/2$.

If we additionally require that the initial function $\varphi\in C_2^a[0, \pi]$, then we can establish the following result on the stability of the solution of the inverse problem.
\begin{thm}\label{estimate} Let assumptions of Theorem \ref{main} be satisfied and let $\varphi\in C_2^a[0, \pi]$. Then  the solution to the invest problem obeys the stability estimate
\[
    ||t^{1-\rho}\partial_t^\rho u||_{C(\overline{\Omega})} + ||t^{1-\rho} u_{xx}||_{C(\overline{\Omega})} + ||t^{1-\rho}p||_{C[0,T]}
\]    
    \begin{equation}\label{se}
    	\leq C_{\rho, q, B} \bigg[  ||\varphi_{xx}||_{C^a[0, \pi]}  + ||t^{1-\rho}\psi||_{C[0,T]}+ ||t^{1-\rho}\partial_t^\rho\psi||_{C[0,T]} + ||t^{1-\rho}f(x,t)||_{C_x^a(\overline{\Omega})}\bigg],
\end{equation}
where $C_{\rho, q, B}$ is a constant, depending only on $\rho, q$ and $B$.
\end{thm}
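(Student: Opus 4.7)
The strategy is to convert the inverse problem into an integral equation for $p$ by testing the PDE against $B$, and then to bootstrap using the Fourier series representation of $u$ that was already built in the proof of Theorem \ref{main}. Applying $B$ to both sides of the equation in (\ref{prob1}) and using the additional condition $B[u(\cdot,t)]=\psi(t)$ yields, since $B[q]\neq 0$,
\[
p(t)=\frac{1}{B[q]}\bigl(i\partial_t^\rho\psi(t)-B[u_{xx}(\cdot,t)]-B[f(\cdot,t)]\bigr).
\]
Multiplying by $t^{1-\rho}$ and using the operator-norm bound $\|B[h]\|_{C[0,T]}\leq b\|h\|_{C(\overline{\Omega})}$ gives
\[
\|t^{1-\rho}p\|_{C[0,T]}\leq \frac{1}{|B[q]|}\Bigl(\|t^{1-\rho}\partial_t^\rho\psi\|_{C[0,T]}+b\|t^{1-\rho}u_{xx}\|_{C(\overline{\Omega})}+b\|t^{1-\rho}f\|_{C(\overline{\Omega})}\Bigr).
\]
Thus the task reduces to controlling $\|t^{1-\rho}u_{xx}\|_{C(\overline{\Omega})}$ in terms of the data and of $\|t^{1-\rho}p\|_{C[0,T]}$.

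For this I would use the sine expansion $u(x,t)=\sum_n u_n(t)\sin(nx)$, where each $u_n$ solves a scalar Riemann--Liouville fractional ODE whose solution is expressed via Mittag-Leffler functions: a homogeneous term proportional to $\varphi_n$, plus convolutions of $(t-s)^{\rho-1}E_{\rho,\rho}(-in^2(t-s)^\rho)$ against $p(s)q_n$ and $f_n(s)$. The extra hypothesis $\varphi\in C_2^a[0,\pi]$ with $a>1/2$ guarantees summability of $\sum n^2|\varphi_n|$, and the standard decay bound $|E_{\rho,\rho}(-in^2 t^\rho)|\leq C_\rho(1+n^2 t^\rho)^{-1}$ controls the convolution tails. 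Repeating, with quantitative care, the estimates already carried out in the proof of Theorem \ref{main}, one obtains the second key inequality
\[
\|t^{1-\rho}u_{xx}\|_{C(\overline{\Omega})}\leq C_\rho\Bigl(\|\varphi_{xx}\|_{C^a[0,\pi]}+\|t^{1-\rho}f\|_{C_x^a(\overline{\Omega})}+\|q\|_{C_2^a[0,\pi]}\,\|t^{1-\rho}p\|_{C[0,T]}\Bigr).
\]

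Substituting this back into the first bound on $\|t^{1-\rho}p\|_{C[0,T]}$ couples the two unknown norms. I would resolve the coupling by a generalized Volterra--Gronwall inequality (or, equivalently, by absorbing on a small initial interval and then iterating forward across $[0,T]$), which produces a finite constant $C_{\rho,q,B}$ bounding $\|t^{1-\rho}p\|_{C[0,T]}$ by the right-hand side of (\ref{se}). Feeding this back into the second key inequality bounds $\|t^{1-\rho}u_{xx}\|_{C(\overline{\Omega})}$, and the bound on $\|t^{1-\rho}\partial_t^\rho u\|_{C(\overline{\Omega})}$ then follows immediately from the equation itself, written as $\partial_t^\rho u=-iu_{xx}-ip(t)q(x)-if(x,t)$.

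The main obstacle is precisely this feedback between $p$ and $u_{xx}$: the $p$-to-$u_{xx}$ map is a Volterra convolution whose Mittag-Leffler kernel is singular like $s^{\rho-1}$ at the origin, so a naive triangle-inequality absorption does not close, and one must propagate the weight $t^{1-\rho}$ through the convolution carefully. The Hölder threshold $a>1/2$ is used at exactly this point, to ensure that termwise double differentiation of the sine series converges uniformly in $x$ rather than merely in $L^2$, which is what makes an $L^\infty$-type stability bound available.
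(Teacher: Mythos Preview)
Your approach is correct but follows a genuinely different route from the paper. The paper never couples $p$ and $u_{xx}$ directly; instead it introduces the auxiliary function $\omega(x,t)=u(x,t)+i\mu(t)q(x)$, where $\partial_t^\rho\mu=p$ with zero initial data, and shows that $\omega$ satisfies a self-contained Volterra integral equation of the second kind (equation (\ref{VE2})) in which the unknown $p$ no longer appears explicitly. The Gronwall step is then applied once, to $\|t^{1-\rho}\omega(\cdot,t)\|_{C[0,\pi]}$ as a function of $t$, yielding a bound on $\omega$ purely in terms of the data; estimates for $\omega_{xx}$, $\partial_t^\rho\omega$, then $p$, $u_{xx}$, $\partial_t^\rho u$ follow in sequence from (\ref{AEA}), Lemma~\ref{ep}, and the explicit formulas (\ref{u}), (\ref{p}), (\ref{mu}).

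Your direct route avoids the substitution $\omega$ at the cost of a coupled system. One point deserves care: your two ``key inequalities'' are stated as global $C$-norm bounds, and substituting one into the other gives $\|t^{1-\rho}p\|\le C\,\text{(data)}+C'\|t^{1-\rho}p\|$ with no control on $C'$, so nothing closes at that level. To run Gronwall you must keep the second inequality in its pointwise-in-$t$ form, namely
\[
t^{1-\rho}\max_x|u_{xx}(x,t)|\le C\,\text{(data)}+C\,t^{1-\rho}\int_0^t(t-s)^{\varepsilon\rho-1}s^{\rho-1}\,s^{1-\rho}|p(s)|\,ds,
\]
obtained exactly as in Lemma~\ref{ep}, and only then combine with the pointwise identity for $p(t)$. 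You clearly intend this (your remarks about the Volterra kernel and interval iteration are on point), but it is worth stating explicitly, since the global-norm version you wrote does not close. The paper's $\omega$-substitution sidesteps this bookkeeping by producing a single Volterra equation from the start.
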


It should be noted that the method proposed here, based on the Fourier method, is applicable to the equation in (\ref{prob1}) with an arbitrary elliptic differential operator $A(x, D)$ instead of $d^2/dx^2$, if only the corresponding spectral problem has a complete system of orthonormal eigenfunctions in $ L_2(G), \, G\subset R^N$.

The interest in the study of source (right-hand side of the equation $F(x,t)$) identification inverse problems is caused primarily in connection with practical requirements in various branches of mechanics, seismology, medical tomography, and geophysics (see e.g. the survey paper Y. Liu et al. \cite{Hand1}). The identification of $F(x,t)=h(t)$ is appropriate, for example, in cases of accidents at nuclear power plants, when it can be assumed that the location of the source is known, but the decay of the radiation power over time is unknown and it is important to estimate it. On the other hand, one example of the identification of $F(x,t)=g(x)$ can be the detection of illegal wastewater discharges, which is a serious problem in some countries.

The inverse problem of determining the source function $F$ with the final time observation have been well studied and many theoretical researches have been published for classical partial differential equations (see, e.g.  monographs Kabanikhin \cite{Kab1} and Prilepko, Orlovsky, and Vasin \cite{prilepko}). As for fractional differential equations, it is possible to construct theories parallel to \cite{Kab1}, \cite{prilepko}, and the work is now ongoing. Let us mention only some of these works (a detailed review can be found in \cite{Hand1}).

It should be noted right away that for the abstract case of the source function $F(x, t)$ there is currently no general closed theory. Known results deal with separated source
term $F(x, t) = h(t)g(x)$. The appropriate choice of the overdetermination
depends on the choice whether the unknown is $h(t)$ or $g(x)$.

Relatively fewer works are devoted to the case when the unknown is the function $h(t)$ (see the survey work \cite{Hand1} and \cite{Yama11} for the case of subdiffusion equations, and, for example, \cite{Ashyr1}-\cite{Ashyr3} for the classical heat equation).

Uniqueness questions in the inverse problem of finding a function $g(x)$ in fractional diffusion equations with the sourse function $g(x)h(t)$ has been studied in, e.g. \cite{Niu}, \cite{MS}, \cite{MS1}.

In many papers, authors have considered an equation, in which $ h (t) \equiv 1 $ and $ g (x) $ is unknown (see, e.g. \cite{12} - \cite{AF1}).
The case of subdiffusion equations whose elliptic part is an ordinary differential expression is considered in \cite{12} - \cite{Tor}. The authors of the articles \cite{20} - \cite{24} studied subdiffusion equations in which the elliptic part is either a Laplace operator or a second-order selfadjoint operator. The paper \cite{25}
studied the inverse problem for the abstract subdiffution equation. In this article \cite{25} and
most other articles, including \cite{20} - \cite{23}, the Caputo derivative is
used as a fractional derivative. The subdiffusion equation considered in the recent papers \cite{AOLob}, \cite{AODif}   contains the fractional Riemann-Liouville derivative, and the elliptical part is an arbitrary elliptic expression of order $m$.
In \cite{13} and \cite{24}, the fractional
derivative in the subdiffusion equation is a two-parameter
generalized Hilfer fractional derivative.
Note also that the papers \cite{13}, \cite{20}, \cite{23} contain a survey of papers dealing with inverse problems of determining the right-hand side
of the subdiffusion equation.

In \cite{24}, \cite{KirSal}, \cite{Ali}, non-self-adjoint differential operators (with nonlocal boundary conditions) were taken as elliptical part of the equation, and solutions to the inverse problem were found in the form of biortagonal series.

In \cite{AF1}, the authors considered an inverse problem for simultaneously determining the order of the Riemann-Liouville fractional derivative and the source function in the subdiffusion equations. Using the classical Fourier method, the authors proved the uniqueness and existence of a solution to this inverse problem.

It should be noted that in all of the listed works, the Cauchy
conditions in time are considered (an exception is work \cite{Saima}, where the integral condition is set with respect to the variable $t$). In the recent paper \cite{AFFF}, for the best of our knowledge, an inverse problem for subdiffusion equation
with a nonlocal condition in time is considered for the first
time.

The papers \cite{AF2} - \cite{AF3} deal with the inverse problem of determining the order of the fractional derivative in the subdiffusion equation and in the wave equation, respectively.

Time-dependent source identification problem (\ref{prob1}) for classical Schr\"odinger
type equations (i.e. $\rho=1$) with additional condition (\ref{ad}) was for the first time investigated in papers of A. Ashyralyev at al. \cite{Ashyr1}-\cite{Ashyr3}. To investigate the inverse  problem (\ref{prob1}), (\ref{ad}) we borrow some
original ideas from these papers.

\section{Preliminaries}

In this section, we  recall some information about Mittag-Leffler functions, differential and integral equations, which we will use in the following sections.

For $0 < \rho < 1$ and an arbitrary complex number $\mu$, by $
E_{\rho, \mu}(z)$ we denote the Mittag-Leffler function  of complex argument $z$ with two
parameters:
\begin{equation}\label{ml}
E_{\rho, \mu}(z)= \sum\limits_{k=0}^\infty \frac{z^k}{\Gamma(\rho
k+\mu)}.
\end{equation}
If the parameter $\mu =1$, then we have the classical
Mittag-Leffler function: $ E_{\rho}(z)= E_{\rho, 1}(z)$.

Since $E_{\rho, \mu}(z)$  is an analytic function of $z$, then it is bounded for $|z|\leq 1$. On the other hand the well known asymptotic estimate of the
Mittag-Leffler function has the form (see, e.g.,
\cite{Dzh66}, p. 133):

\begin{lem}\label{ml} Let $\mu$ be an arbitrary complex number. Further let $\alpha$ be a fixed number, such that $\frac{\pi}{2}\rho<\alpha<\pi \rho$, and $\alpha \leq |\arg z|\leq \pi$. Then the following asymptotic estimate holds
\[
E_{\rho, \mu}(z)= -
\sum\limits_{k=1}^2\frac{z^{-k}}{\Gamma(\rho-k\mu)} + O(|z|^{-3}),
\,\, |z|>1.
\]
\end{lem}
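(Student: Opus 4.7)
The plan is to derive the asymptotic from a Hankel contour representation of $E_{\rho,\mu}$, reducing the statement to Hankel's formula for the reciprocal gamma function together with a remainder estimate. I would begin with Hankel's identity
$$\frac{1}{\Gamma(\rho k+\mu)} = \frac{1}{2\pi i}\int_{\mathcal{C}} e^\zeta\,\zeta^{-\rho k-\mu}\,d\zeta,$$
where $\mathcal{C}$ is a Hankel contour (from $-\infty$ below the cut, around the origin at radius $r$, back to $-\infty$ above the cut). Substituting termwise into the series (\ref{ml}) and choosing $r^\rho > |z|$, the resulting geometric series in $z/\zeta^\rho$ converges uniformly on $\mathcal{C}$ and gives the integral representation
$$E_{\rho,\mu}(z) = \frac{1}{2\pi i}\int_{\mathcal{C}} \frac{e^\zeta\,\zeta^{\rho-\mu}}{\zeta^\rho-z}\,d\zeta,$$
initially for small $|z|$ and then for all $z$ with $|\arg z|>\pi\rho/2$ by analytic continuation, deforming $\mathcal{C}$ to avoid any singularities $\zeta^\rho=z$.

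Next I would expand the kernel via the finite geometric identity
$$\frac{1}{\zeta^\rho-z} \;=\; -\sum_{k=1}^{N}\frac{\zeta^{\rho(k-1)}}{z^k} + \frac{\zeta^{\rho N}}{z^N(\zeta^\rho-z)},$$
insert it into the integral, and reverse Hankel's formula on each polynomial summand using $(2\pi i)^{-1}\int_{\mathcal{C}} e^\zeta\zeta^{\rho k-\mu}\,d\zeta = 1/\Gamma(\mu-\rho k)$. Setting $N=2$ produces the two explicit terms of the statement, together with the remainder
$$R_2(z) = \frac{1}{2\pi i\,z^2}\int_{\mathcal{C}} \frac{e^\zeta\,\zeta^{3\rho-\mu}}{\zeta^\rho-z}\,d\zeta.$$

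The only analytically delicate step is proving $R_2(z)=O(|z|^{-3})$ uniformly in the sector $\alpha\le|\arg z|\le\pi$. The idea is to deform $\mathcal{C}$ into two rays at angles $\pm\alpha/\rho$ joined by a circular arc of radius $r$ comparable to $|z|^{1/\rho}$. The assumption $\pi\rho/2<\alpha<\pi\rho$ serves two purposes: first, $\alpha/\rho\in(\pi/2,\pi)$ forces $\mathrm{Re}\,\zeta\to-\infty$ along the rays, so that $e^\zeta$ decays exponentially; second, the rays stay clear of the set $\arg\zeta^\rho=\arg z$, so an elementary geometric estimate yields $|\zeta^\rho-z|\ge c\,\max(|\zeta|^\rho,|z|)$ on the contour with $c=c(\alpha,\rho)>0$. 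Splitting $R_2$ into the arc and ray pieces and performing the standard Watson-type estimates then gives $|R_2(z)|\le C|z|^{-3}$ uniformly in $\arg z$; complex $\mu$ contributes only a polynomial factor $|\zeta|^{-\mathrm{Re}\,\mu}$ which is absorbed into the constant. The main obstacle is therefore not the mechanism of the proof but the careful coordination of the contour radius $r$ with $|z|$ needed to keep both the constant $c$ and the resulting bound uniform across the whole closed sector, including the direction $|\arg z|=\pi$.
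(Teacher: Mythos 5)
The paper offers no proof of this lemma: it is quoted as a known asymptotic expansion with a pointer to Dzherbashian \cite{Dzh66}, p.~133, so there is no in-paper argument to compare against. Your sketch is, in substance, the classical proof from that source (and from Gorenflo--Kilbas--Mainardi--Rogozin): Hankel's formula for $1/\Gamma$, termwise summation to obtain $E_{\rho,\mu}(z)=\frac{1}{2\pi i}\int_{\mathcal C}e^{\zeta}\zeta^{\rho-\mu}(\zeta^{\rho}-z)^{-1}d\zeta$, a finite geometric expansion of the kernel, and a Watson-type bound on the remainder; the mechanism is sound and the identities you use are correct. Two points should be fixed. First, your computation yields the coefficients $1/\Gamma(\mu-k\rho)$, not $1/\Gamma(\rho-k\mu)$ as printed in the statement; the former is the correct classical formula and the printed version is a typo in the paper, so your argument proves the intended assertion rather than the literal one --- say so explicitly. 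Second, placing the contour rays at angles exactly $\pm\alpha/\rho$ is not safe: the sector $\alpha\le|\arg z|\le\pi$ is closed, so for $\arg z=\pm\alpha$ the image ray $\arg\zeta^{\rho}=\pm\alpha$ passes through $z$ and the denominator $\zeta^{\rho}-z$ vanishes at $|\zeta|=|z|^{1/\rho}$. Instead fix $\theta$ with $\pi/2<\theta<\alpha/\rho$ (possible precisely because $\alpha>\pi\rho/2$, while $\alpha<\pi\rho$ keeps $\theta<\pi$); then $|\arg\zeta^{\rho}|\le\rho\theta<\alpha\le|\arg z|$ gives the uniform lower bound $|\zeta^{\rho}-z|\ge c\max(|\zeta|^{\rho},|z|)$ with $c=c(\rho,\alpha,\theta)>0$, $e^{\zeta}$ still decays exponentially along the rays since $\cos\theta<0$, and your remainder estimate $|R_2(z)|\le C|z|^{-3}$ follows with a constant depending on $\mu$ only through the bounded factor $e^{|\mathrm{Im}\,\mu|\,\pi}|\zeta|^{-\mathrm{Re}\,\mu}$. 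With those two repairs the proposal is a complete and correct proof of the (corrected) statement.
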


We can choose the parameter $\alpha$ so that the following estimate is valid:
\begin{cor}\label{ml1} For any $t\geq 0$ one has
\[
|E_{\rho, \mu}(it)|\leq \frac{C}{1+t},
\]
where constant $C$ does not depend on $t$ and $\mu$.
\end{cor}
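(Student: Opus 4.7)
The plan is to split the argument at $t=1$ and handle the two regimes $0\le t\le 1$ and $t>1$ separately, using analyticity in the former and the asymptotic expansion of Lemma~\ref{ml} in the latter.

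First I would choose $\alpha$ so that Lemma~\ref{ml} is applicable at $z=it$. Since $0<\rho<1$, we have $\pi\rho/2<\pi/2$, so one may pick $\alpha=\tfrac{1}{2}(\pi\rho/2+\pi/2)$; this number satisfies $\pi\rho/2<\alpha<\pi\rho$ (because $\alpha<\pi/2<\pi\rho$ only when $\rho>1/2$; if $\rho\le 1/2$ one instead takes any $\alpha\in(\pi\rho/2,\pi\rho)$, which is still $\le\pi/2$). In either case $\alpha\le \pi/2=|\arg(it)|\le\pi$, so the hypothesis of Lemma~\ref{ml} is met.

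For $0\le t\le 1$, the point $z=it$ lies in the closed unit disk, on which the entire function $E_{\rho,\mu}(z)$ is bounded by some $M$. Since $1/(1+t)\ge 1/2$ on $[0,1]$, I get $|E_{\rho,\mu}(it)|\le M\le 2M/(1+t)$. For $t>1$, Lemma~\ref{ml} applied at $z=it$ gives
\[
E_{\rho,\mu}(it)=-\frac{(it)^{-1}}{\Gamma(\mu-\rho)}-\frac{(it)^{-2}}{\Gamma(\mu-2\rho)}+O(t^{-3}),
\]
so $|E_{\rho,\mu}(it)|\le C_1/t$ for $t>1$, and the elementary inequality $1/t\le 2/(1+t)$ on $(1,\infty)$ yields $|E_{\rho,\mu}(it)|\le 2C_1/(1+t)$. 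Taking $C=\max(2M,2C_1)$ combines the two cases.

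The main obstacle is the claim that $C$ does not depend on $\mu$: both the bound $M$ on the unit disk (coming from the series $\sum 1/|\Gamma(\rho k+\mu)|$) and the constant $C_1$ (coming from the coefficients $1/\Gamma(\mu-k\rho)$ and the implied constant in the $O(t^{-3})$ term) genuinely depend on $\mu$. This uniformity is meaningful only if $\mu$ ranges over a set bounded away from the poles $\mu-k\rho\in\{0,-1,-2,\dots\}$ and the values $\rho k+\mu\in\{0,-1,-2,\dots\}$; in the intended applications $\mu$ takes finitely many values (such as $\mu=1,\rho,2\rho$) arising in the Fourier representation, so the pointwise constants combine into a single uniform one. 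Apart from this essentially cosmetic point, the proof is a direct two-case argument.
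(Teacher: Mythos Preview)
Your approach is precisely the one the paper has in mind: the sentence preceding Lemma~\ref{ml} (``Since $E_{\rho,\mu}(z)$ is an analytic function of $z$, then it is bounded for $|z|\le 1$'') together with the sentence preceding the Corollary (``We can choose the parameter $\alpha$ so that the following estimate is valid'') is the paper's entire argument, and you have filled in exactly these two steps. Your observation about the $\mu$-dependence is correct and worth noting---the paper's claim of uniformity in $\mu$ is not literally true for arbitrary $\mu$, but, as you say, only finitely many values of $\mu$ occur in the applications, so this is harmless.
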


We will also use a coarser estimate with positive number $\lambda$
and $0<\varepsilon<1$:
\begin{equation}\label{ml2}
|t^{\rho-1} E_{\rho,\rho}(-i\lambda t^\rho)|\leq
\frac{Ct^{\rho-1}}{1+\lambda t^\rho}\leq C
\lambda^{\varepsilon-1} t^{\varepsilon\rho-1}, \quad t>0,
\end{equation}
which is easy to verify. Indeed, let $t^\rho\lambda<1$, then $t<
\lambda^{-1/\rho}$ and
$$
t^{\rho -1} = t^{\rho-\varepsilon\rho} t^{\varepsilon\rho-1} <
\lambda^{\varepsilon-1}t^{\varepsilon\rho-1}.
$$
If $t^\rho\lambda\geq 1$, then $\lambda^{-1}\leq t^\rho$ and
$$
\lambda^{-1} t^{-1}=\lambda^{-1+\varepsilon}
\lambda^{-\varepsilon} t^{-1}\leq
\lambda^{\varepsilon-1}t^{\varepsilon\rho-1}.
$$
\begin{lem}\label{du} Let $t^{1-\rho}g(t)\in C[0, T]$. Then the unique solution of the Cauchy problem
\begin{equation}\label{prob.y}
\left\{
\begin{aligned}
&i\partial_t^\rho y(t) + \lambda y(t) =g(t),\quad 0<t\leq T;\\
&\lim\limits_{t\rightarrow 0}J_t^{\rho-1}y(t)=y_0
\end{aligned}
\right.
\end{equation}
has the form
\[
y(t)=t^{\rho-1} E_{\rho, \rho}(i\lambda t^\rho) y_0 - i \int\limits_0^t (t-s)^{\rho-1} E_{\rho,\rho} (i\lambda (t-s)^\rho) g(s) ds.
\]
\end{lem}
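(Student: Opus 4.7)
The plan is to reduce the Cauchy problem to a canonical form, write down the candidate via the standard variation-of-parameters formula for Riemann--Liouville fractional equations, and then verify that it works by direct substitution. Multiplying the equation by $-i$ rewrites it as
\[
\partial_t^\rho y(t) - i\lambda\, y(t) = -i g(t), \qquad \lim_{t\to 0} J_t^{\rho-1} y(t) = y_0,
\]
which is the standard form $\partial_t^\rho y - \mu y = F$ with $\mu = i\lambda$, $F = -ig$. The proposed representation is the Duhamel formula for this canonical form, which is why the Mittag-Leffler kernel $t^{\rho-1} E_{\rho,\rho}(i\lambda t^\rho)$ appears in both the homogeneous and the convolution piece.

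For existence, I would verify the formula by termwise computation. Differentiating the series in (\ref{ml}) and using the Riemann--Liouville rule $\partial_t^\rho t^{\beta-1} = \frac{\Gamma(\beta)}{\Gamma(\beta-\rho)} t^{\beta-\rho-1}$ gives the key identity
\[
\partial_t^\rho\bigl[t^{\rho-1} E_{\rho,\rho}(i\lambda t^\rho)\bigr] = i\lambda\, t^{\rho-1} E_{\rho,\rho}(i\lambda t^\rho),
\]
so the homogeneous term handles the $y_0$-part of the equation. For the convolution part, writing $K(t) = t^{\rho-1} E_{\rho,\rho}(i\lambda t^\rho)$ and treating $(K*g)(t) = \int_0^t K(t-s) g(s)\,ds$, one needs
\[
\partial_t^\rho (K*g)(t) = i\lambda\,(K*g)(t) + g(t).
\]
This follows from the definition $\partial_t^\rho = \frac{d}{dt} J_t^{\rho-1}$ after computing $J_t^{\rho-1} K$ termwise (the series collapses to $E_{\rho,1}(i\lambda t^\rho)$) and differentiating; the extra $g(t)$ comes from the usual boundary term of the convolution. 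The initial condition is checked by (\ref{property}): since $E_{\rho,\rho}(0) = 1/\Gamma(\rho)$, the homogeneous piece contributes $\Gamma(\rho) \cdot 1/\Gamma(\rho) \cdot y_0 = y_0$, while the convolution piece, bounded by the estimate (\ref{ml2}) times $\|t^{1-\rho} g\|_{C[0,T]}$, yields $O(t^{\varepsilon\rho})$ after the $J_t^{\rho-1}$ is absorbed, hence vanishes as $t \to 0$.

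For uniqueness, I would set $y_0 = 0$ and $g = 0$ in the homogeneous problem and apply $J_t^\rho$ to obtain an equivalent Volterra integral equation of the second kind with weakly singular but integrable kernel of the form $c \int_0^t (t-s)^{\rho-1} y(s)\,ds$. A standard iteration (or Gronwall-type) argument for such Volterra equations then forces $y \equiv 0$ on $[0,T]$.

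The main obstacle is the rigorous handling of $\partial_t^\rho$ applied to the convolution, in particular justifying the interchange of $d/dt$ with the integral (the integrand has a weak singularity at $s=t$) and correctly extracting the forcing term $g(t)$. The cleanest route is to treat the series for $K(t)$ termwise, use the semigroup property $J_t^{\rho-1}\bigl[(t-\cdot)^{\rho k + \rho -1} * g\bigr]$ explicitly, and then resum; alternatively, a short computation via the Laplace transform in $t$ (noting $\mathcal{L}\{K\}(p) = (p^\rho - i\lambda)^{-1}$) bypasses the regularity issues and confirms the same formula.
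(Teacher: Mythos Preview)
Your proposal is correct and follows the same route as the paper: multiply the equation by $-i$ to put it in the canonical form $\partial_t^\rho y - i\lambda y = -ig$, then invoke the standard Duhamel representation for Riemann--Liouville Cauchy problems. The paper's proof simply cites formula (7.2.16) of \cite{Gor} (and \cite{AshCab}, \cite{AshCab2}) at this point, whereas you supply a self-contained verification via termwise differentiation of the Mittag--Leffler series, the identity $J_t^{\rho-1}K = E_{\rho,1}(i\lambda t^\rho)$, and a Volterra argument for uniqueness; this buys independence from the reference at the cost of a little extra work, but the underlying argument is identical.
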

\begin{proof} Multiply equation (\ref{prob.y})  by $(-i)$ and then apply formula (7.2.16) of \cite{Gor}, p. 174 (see also \cite{AshCab}, \cite{AshCab2}).

\end{proof}

Let us denote by $A$ the operator $-d^2/dx^2$ with the domain $D(A) = \{v(x)\in W_2^2(0, \pi): v(0)=v(\pi)=0\}$, where $W_2^2(0, \pi)$ - the standard Sobolev space. Operator $A$ is selfadjoint in $L_2(0, \pi)$ and has the complete in $L_2(0, \pi)$ set of  eigenfunctions $\{v_k(x) = \sin kx\}$ and eigenvalues $\lambda_k=k^2$, $k=1,2,...$.

Consider the operator $E_{\rho, \mu} (it A)$, defined by the spectral theorem of J. von Neumann:
\[
E_{\rho, \mu} (it A)h(x,t) = \sum\limits_{k=1}^\infty E_{\rho,\mu} (it \lambda_k) h_k(t) v_k(x),
\]
here and everywhere below, by $h_k(t)$  we will denote the Fourier coefficients of a function $h(x,t)$: $h_k(t)=(h(x,t),v_k)$, $(\cdot, \cdot)$ stands for the scalar product in $L_2(0, \pi)$. This series converges in the $L_2(0, \pi)$ norm. But we need to investigate the uniform convergence of this series in $\Omega$. To do this, we recall the following statement.

\begin{lem}\label{Zyg}Let  $g\in C^a[0, \pi]$.
Then for any $\sigma\in [0, a-1/2)$ one has
\[
\sum\limits_{k=1}^\infty k^\sigma|g_k|<\infty.
\]	
\end{lem}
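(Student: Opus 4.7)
The plan is to carry out the classical Bernstein--Zygmund dyadic-block argument for absolute convergence of Fourier series of Hölder continuous functions.

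First I would extend $g$ from $[0,\pi]$ to an odd, $2\pi$-periodic function $\widetilde g$ on the real line. Since $g(0)=g(\pi)=0$ by the very definition of $C^a[0,\pi]$, a short case analysis (comparing same-sign and opposite-sign arguments, and using $g(0)=0$, $g(\pi)=0$ to bridge the gluing points) shows that $\widetilde g$ inherits the Hölder exponent $a$ on the whole circle, with $\|\widetilde g\|_{C^a}\le C\|g\|_{C^a[0,\pi]}$. The sine coefficients $g_k=(g,\sin kx)$ coincide, up to a fixed normalisation factor, with the complex Fourier coefficients $c_k(\widetilde g)$, so the series in question is, apart from a constant, $\sum_{k\ge 1}k^\sigma|c_k(\widetilde g)|$.

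Next I would apply Parseval's identity to the shifted difference $\widetilde g(\cdot+h)-\widetilde g(\cdot)$, whose squared $L^2$ norm is bounded by $Ch^{2a}$ by the Hölder estimate:
\[
\sum_k 4|c_k(\widetilde g)|^2\sin^2(kh/2)\le Ch^{2a}.
\]
Setting $h=\pi/2^n$ and keeping only indices in the dyadic block $2^{n-1}\le|k|<2^n$, the factor $\sin^2(kh/2)$ is bounded below by a positive absolute constant, so
\[
S_n:=\sum_{2^{n-1}\le k<2^n}|c_k(\widetilde g)|^2\le C'\,2^{-2na}.
\]
To convert this $\ell^2$ block estimate into the weighted $\ell^1$ bound, I would apply Cauchy--Schwarz inside each block:
\[
\sum_{2^{n-1}\le k<2^n}k^\sigma|c_k(\widetilde g)|\le \bigl(\sum_{2^{n-1}\le k<2^n}k^{2\sigma}\bigr)^{1/2}\sqrt{S_n}\le C\,2^{n(\sigma+1/2-a)},
\]
and then sum the geometric series over $n\ge 1$. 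It converges precisely when $\sigma+1/2-a<0$, i.e.\ under the hypothesis $\sigma<a-1/2$, which is exactly what is required.

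I expect the only genuinely non-routine step to be the odd-extension reduction: one must check that the boundary vanishing $g(0)=g(\pi)=0$ truly preserves the Hölder exponent across the gluing points $0$ and $\pm\pi$, because without it the extension would develop a jump and the $L^2$ modulus of continuity would degrade to $O(h^{1/2})$, forcing $\sigma<0$. Once that reduction is in place, the remainder is the standard Bernstein dyadic-block estimate and requires no further input beyond Parseval and Cauchy--Schwarz.
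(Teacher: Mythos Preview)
Your argument is correct and follows essentially the same route as the paper: a dyadic-block $\ell^2$ bound on the Fourier coefficients, Cauchy--Schwarz inside each block, and summation of the resulting geometric series. The only difference is that the paper quotes the block estimate $\sum_{2^{n-1}<k\le 2^n}|g_k|^2\le \omega_g^2(2^{-(n+1)})$ directly from Zygmund, whereas you derive it yourself via the odd periodic extension and Parseval applied to $\widetilde g(\cdot+h)-\widetilde g(\cdot)$; your remark that $g(0)=g(\pi)=0$ is what makes the extension keep the H\"older exponent is exactly the point behind that cited inequality.
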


For $\sigma=0$ this assertion coincides with the well-known theorem of S. N. Bernshtein on the absolute convergence of trigonometric series and is proved in exactly the same way as this theorem. For the convenience of readers, we recall the main points of the proof (see, e.g. \cite{Zyg}, p. 384).

\begin{proof}

In theorem (3.1) of A. Zygmund \cite{Zyg}, p. 384, it is proved
that for an arbitrary function $g(x)\in C[0,\pi]$, with the properties
$g(0)=g(\pi)=0$, one has the estimate
\[
\sum\limits_{k=2^{n-1}+1}^{2^n} |g_k|^2 \leq
\omega_g^2\bigg(\frac{1}{2^{n+1}}\bigg).
\]
Therefore, if $\sigma\geq 0$, then by the Cauchy-Bunyakovsky inequality
\[
\sum\limits_{k=2^{n-1}+1}^{2^n} k^\sigma |g_k| \leq
\bigg(\sum\limits_{k=2^{n-1}+1}^{2^n}
|g_k|^2\bigg)^{\frac{1}{2}}\bigg(\sum\limits_{k=2^{n-1}+1}^{2^n}
k^{2\sigma}\bigg)^{\frac{1}{2}}\leq C
2^{n(\frac{1}{2}+\sigma)}\omega_g\bigg(\frac{1}{2^{n+1}}\bigg),
\]
and finally
\[
\sum\limits_{k=2}^{\infty} k^\sigma|g_k|=
\sum\limits_{n=1}^\infty\sum\limits_{k=2^{n-1}+1}^{2^n} k^\sigma|g_k| \leq  C \sum\limits_{n=1}^\infty
2^{n(\frac{1}{2}+\sigma)}\omega_g\bigg(\frac{1}{2^{n+1}}\bigg).
\]
Obviously, if $\omega_g(\delta)\leq C \delta^a$, $a>1/2$ and $0<\sigma<a - 1/2$, then
the last series converges:
\[
\sum\limits_{k=2}^{\infty} k^\sigma|g_k|\leq C ||g||_{C^a[0,\pi]}.
\]

\end{proof}

\begin{lem}\label{EA}Let 
$h(x,t)\in C_x^a(\overline{\Omega})$ 
	Then $E_{\rho, \mu} (it A)h(x,t)\in C(\overline{\Omega})$ and  $\frac{\partial^2}{\partial x^2} E_{\rho, \mu} (it A)h(x,t)\in C([0, \pi]\times(0,T])$. Moreover, the following estimates hold
    \begin{equation}\label{E}
    ||E_{\rho, \mu} (it A)h(x,t)||_{C(\overline{\Omega})} \leq C ||h||_{C_x^a(\overline{\Omega})};
    \end{equation}
\begin{equation}\label{AE}
   \bigg |\bigg|\frac{\partial^2}{\partial x^2} E_{\rho, \mu} (it A)h(x,t)\bigg|\bigg|_{C[0, \pi]} \leq C t^{-1} ||h||_{C_x^a(\overline{\Omega})}, \,\, t>0.
    \end{equation}

If 
$h(x,t)\in C_{2,x}^a(\overline{\Omega})$,
then

\begin{equation}\label{AEA}
   \bigg |\bigg|\frac{\partial^2}{\partial x^2} E_{\rho, \mu} (it A)h(x,t)\bigg|\bigg|_{C(\overline{\Omega})} \leq C ||h_{xx}||_{C_x^a(\overline{\Omega})}.
\end{equation}

\end{lem}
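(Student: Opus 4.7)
The plan is to work with the spectral expansion
\[
E_{\rho,\mu}(itA)h(x,t) = \sum_{k=1}^\infty E_{\rho,\mu}(itk^2)\, h_k(t)\, \sin kx
\]
and combine two quantitative tools: the uniform bound $|E_{\rho,\mu}(itk^2)| \leq C/(1+tk^2)$ supplied by Corollary \ref{ml1}, and the Bernshtein-type estimate $\sum_k |g_k| \leq C\|g\|_{C^a[0,\pi]}$ from Lemma \ref{Zyg}, applied slicewise in the variable $t$.

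For the first claim and for estimate (\ref{E}), I would majorize each term of the series by $|E_{\rho,\mu}(itk^2)|\cdot|h_k(t)|\cdot|\sin kx| \leq C |h_k(t)|$. Since for every fixed $t\in[0,T]$ the function $h(\cdot,t)$ lies in $C^a[0,\pi]$ with $\|h(\cdot,t)\|_{C^a[0,\pi]} \leq \|h\|_{C_x^a(\overline{\Omega})}$, the dyadic block argument reproduced in the proof of Lemma \ref{Zyg} bounds $\sum_{k > N}|h_k(t)|$ by a quantity of the form $C\|h\|_{C_x^a(\overline{\Omega})} \sum_{n > \log_2 N} 2^{n(1/2-a)}$, which is independent of $t$ and tends to zero as $N\to\infty$. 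This delivers both uniform convergence of the series on $\overline{\Omega}$ (hence continuity of the limit, since every partial sum is continuous in $(x,t)$) and the estimate (\ref{E}).

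For estimate (\ref{AE}), I would differentiate termwise to obtain $-\sum k^2 E_{\rho,\mu}(itk^2) h_k(t)\sin kx$. The elementary inequality $k^2/(1+tk^2) \leq 1/t$, valid for $t>0$, together with Corollary \ref{ml1} bounds the $k$-th term by $C|h_k(t)|/t$; summing exactly as in the previous step yields the desired factor $t^{-1}\|h\|_{C_x^a(\overline{\Omega})}$. On any strip $[0,\pi]\times[\varepsilon,T]$ with $\varepsilon>0$, the majorant $C|h_k(t)|/\varepsilon$ is uniform in $(x,t)$, so the differentiated series converges uniformly on such strips; this both justifies the termwise differentiation and yields continuity on $[0,\pi]\times(0,T]$.

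For estimate (\ref{AEA}), the hypothesis $h\in C_{2,x}^a(\overline{\Omega})$ guarantees $h(0,t)=h(\pi,t)=0$ and $h_{xx}(0,t)=h_{xx}(\pi,t)=0$, so two integrations by parts in $h_k(t)=(2/\pi)\int_0^\pi h(x,t)\sin kx\,dx$ give $h_k(t) = -k^{-2}(h_{xx})_k(t)$. Substituting into the series for the second derivative cancels the factor $k^2$ and leaves exactly $E_{\rho,\mu}(itA)h_{xx}(x,t)$, to which estimate (\ref{E}) applies with $h_{xx}\in C_x^a(\overline{\Omega})$ in place of $h$. The main technical point to watch is the uniformity in $t$ of the Bernshtein tail bound: the dyadic estimate depends on $h(\cdot,t)$ only through its $C^a$-modulus, which is dominated by $\|h\|_{C_x^a(\overline{\Omega})}$ uniformly in $t$, and it is precisely this observation that lets the Weierstrass-type argument operate simultaneously for all $(x,t)$ and thereby produce joint continuity.
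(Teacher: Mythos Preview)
Your proof is correct and follows essentially the same route as the paper: bound the Mittag--Leffler factor via Corollary~\ref{ml1}, sum the Fourier coefficients with Lemma~\ref{Zyg}, use $k^2/(1+tk^2)\le 1/t$ for (\ref{AE}), and integrate by parts to get $h_k(t)=-\lambda_k^{-1}(h_{xx})_k(t)$ for (\ref{AEA}). You are in fact somewhat more explicit than the paper about the uniform-in-$t$ tail estimate needed for joint continuity and about justifying termwise differentiation.
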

\begin{proof}By definition one has
    \[
    |E_{\rho, \mu} (it A)h(x,t)|=\bigg|\sum\limits_{k=1}^\infty E_{\rho, \mu} (it \lambda_k) h_k(t) v_k(x)\bigg|\leq\sum\limits_{k=1}^\infty |E_{\rho, \mu} (it \lambda_k) h_k(t)|.
    \]
    Corollary \ref{ml1} and Lemma \ref{Zyg} imply
    \[
    |E_{\rho, \mu} (it A)h(x,t)|\leq C \sum\limits_{k=1}^\infty \bigg|\frac{ h_k(t)}{1+t \lambda_k}\bigg|\leq C ||h||_{C_x^a(\overline{\Omega})}.
    \]
On the other hand,
    \[
   \bigg |\frac{\partial^2}{\partial x^2} E_{\rho, \mu} (it A)h(x,t)\bigg|\leq C \sum\limits_{k=1}^\infty \bigg|\frac{\lambda_k h_k(t)}{1+t \lambda_k}\bigg|\leq C t^{-1} ||h||_{C_x^a(\overline{\Omega})}, \,\, t>0.
    \]

    If $h(x,t)\in C_{2,x}^a(\overline{\Omega})$, then $h_k(t)=-\lambda_k^{-1} (h_{xx})_k(t)$. Therefore,
 \[
\bigg|\frac{\partial^2}{\partial x^2} E_{\rho, \mu} (it A)h(x,t)\bigg|\leq C ||h_{xx}||_{C_x^a(\overline{\Omega})}, \,\, 0\leq t\leq T.
\]
\end{proof}

\begin{lem}\label{EAintL}Let $t^{1-\rho}g(x,t)\in C_x^a(\overline{\Omega})$. Then  there is a positive constant $c_1$ such that 
	
	\begin{equation}\label{EAint}
		\bigg|t^{1-\rho}\int\limits_0^t (t-s)^{\rho-1}  E_{\rho, \rho} (i(t-s)^\rho A)g(x,s)ds \bigg| \leq c_1 \frac{t^\rho}{\rho}||t^{1-\rho}g||_{C_x^a(\overline{\Omega})}.
	\end{equation}

	\end{lem}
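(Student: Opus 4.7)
The plan is to reduce the singular integrand to a Hölder-regular object by factoring out the singularity in $s$, then apply the same spectral-series bound that was used in Lemma \ref{EA}.

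First I would set $G(x,s) = s^{1-\rho} g(x,s)$, so by hypothesis $G \in C_x^a(\overline{\Omega})$, and rewrite the integrand as
\[
(t-s)^{\rho-1} E_{\rho,\rho}\!\left(i(t-s)^\rho A\right) g(x,s) = (t-s)^{\rho-1} s^{\rho-1} E_{\rho,\rho}\!\left(i(t-s)^\rho A\right) G(x,s).
\]
Next, using the spectral definition of the operator $E_{\rho,\rho}(i(t-s)^\rho A)$ and taking absolute values termwise, I would bound
\[
\bigl| E_{\rho,\rho}\!\left(i(t-s)^\rho A\right) G(x,s)\bigr| \leq \sum_{k=1}^\infty \bigl| E_{\rho,\rho}\!\left(i(t-s)^\rho \lambda_k\right)\bigr|\, |G_k(s)|.
\]
Corollary \ref{ml1} gives the uniform bound $|E_{\rho,\rho}(i(t-s)^\rho \lambda_k)| \leq C$ (absorbing the harmless denominator), and Lemma \ref{Zyg} with $\sigma = 0$, applied to $G(\cdot,s) \in C^a[0,\pi]$, yields
\[
\sum_{k=1}^\infty |G_k(s)| \leq C \|G(\cdot,s)\|_{C^a[0,\pi]} \leq C \|t^{1-\rho} g\|_{C_x^a(\overline{\Omega})}.
\]
Combining these gives the uniform pointwise bound $|E_{\rho,\rho}(i(t-s)^\rho A) G(x,s)| \leq C \|t^{1-\rho}g\|_{C_x^a(\overline{\Omega})}$ independent of $t$ and $s$.

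Plugging this into the integral, I would obtain
\[
\Bigl| t^{1-\rho}\!\int_0^t (t-s)^{\rho-1} E_{\rho,\rho}(i(t-s)^\rho A) g(x,s)\, ds \Bigr| \leq C\, t^{1-\rho} \|t^{1-\rho} g\|_{C_x^a(\overline{\Omega})} \int_0^t (t-s)^{\rho-1} s^{\rho-1}\, ds.
\]
The remaining Beta integral equals $t^{2\rho-1} B(\rho,\rho) = t^{2\rho-1}\Gamma(\rho)^2/\Gamma(2\rho)$, and the useful step is to notice that $\Gamma(\rho)^2/\Gamma(2\rho) = 2\Gamma(\rho+1)^2/\bigl(\rho\, \Gamma(2\rho+1)\bigr)$, which is bounded above by $c_1/\rho$ for $\rho \in (0,1)$ since $\Gamma(\rho+1)^2/\Gamma(2\rho+1)$ stays bounded on this interval. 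This produces exactly the $t^\rho/\rho$ dependence in the claimed inequality.

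The only slightly delicate point is recognizing that the factor $1/\rho$ is built into the Beta function at coincident arguments; the rest of the argument is a routine combination of the spectral estimate and the Bernshtein-type Fourier bound. No genuine obstacle is expected.
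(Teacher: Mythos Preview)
Your proof is correct and follows essentially the same route as the paper: both reduce to the pointwise bound $|E_{\rho,\rho}(i(t-s)^\rho A)G(x,s)|\le C\|t^{1-\rho}g\|_{C_x^a(\overline\Omega)}$ (the paper simply cites estimate (\ref{E}) from Lemma~\ref{EA} rather than re-deriving it) and then control $t^{1-\rho}\int_0^t(t-s)^{\rho-1}s^{\rho-1}\,ds$. The only cosmetic difference is that the paper bounds this Beta integral by the elementary splitting $\int_0^{t/2}+\int_{t/2}^t\le \tfrac{2^{2(1-\rho)}}{\rho}t^{2\rho-1}$, whereas you evaluate it exactly and extract the $1/\rho$ from $\Gamma(\rho)^2/\Gamma(2\rho)$; either way yields the stated $c_1 t^\rho/\rho$.
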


\begin{proof}

Apply estimate (\ref{E}) to get
    \[
    	\bigg|t^{1-\rho}\int\limits_0^t (t-s)^{\rho-1}  E_{\rho, \rho} (i(t-s)^\rho A)g(x,s)ds \bigg| \leq C t^{1-\rho}\int_{0}^{t}(t-s)^{\rho-1}s^{\rho-1} ds\cdot ||t^{1-\rho}g||_{C_x^a(\overline{\Omega})}.
    \]
For the integral one has
\begin{equation}\label{int}
	\int_{0}^{t}(t-s)^{\rho-1}s^{\rho-1} ds=	\int\limits_0^{\frac{t}{2}} \cdot + \int\limits_{\frac{t}{2}}^{t} \cdot\leq \frac{2^{2(1-\rho)}}{\rho} t^{2\rho-1}.
	\end{equation}
Denoting $c_1=4C$ we obtain the assertion of the lemma.
\end{proof}

\begin{cor}\label{g1g2} If  function $g(x,t)$ can be represented in the form $g_1(x) g_2(t)$, then the right-hand side of estimate (\ref{EAint}) has the form:
	\[
	c_1 \frac{t^\rho}{\rho}||g_1||_{C^a[0,\pi]}||t^{1-\rho}g_2||_{C[0,T
		]}.
	\]
	
	\end{cor}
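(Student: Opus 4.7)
The plan is to apply Lemma \ref{EAintL} directly and observe that for separated source terms the mixed norm $\|t^{1-\rho}g\|_{C_x^a(\overline{\Omega})}$ factors into a product of one-variable norms. So the only real content is to verify this factorization, after which (\ref{EAint}) immediately yields the desired estimate.

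First, I would set $g(x,t) = g_1(x) g_2(t)$ and compute the modulus of continuity in $x$ of $h(x,t) := t^{1-\rho} g(x,t) = g_1(x)\cdot t^{1-\rho} g_2(t)$. Since the $t$-dependent factor is independent of $x$, it pulls out of the supremum:
\[
\omega_h(\delta; t) = \sup_{|x_1-x_2|\leq \delta} |g_1(x_1)-g_1(x_2)|\cdot |t^{1-\rho} g_2(t)| \leq \|g_1\|_{C^a[0,\pi]}\,|t^{1-\rho} g_2(t)|\,\delta^a.
\]
Taking the supremum in $t$ and recalling the definition of the $C_x^a$-norm, this gives
\[
\|t^{1-\rho} g\|_{C_x^a(\overline{\Omega})} \leq \|g_1\|_{C^a[0,\pi]}\, \|t^{1-\rho} g_2\|_{C[0,T]},
\]
where I also use $g_1(0)=g_1(\pi)=0$ (forced by $g_1 \in C^a[0,\pi]$) so that $h$ indeed vanishes at the lateral boundary and the class $C_x^a(\overline{\Omega})$ applies. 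Substituting this bound into inequality (\ref{EAint}) of Lemma \ref{EAintL} gives the claimed estimate with the same absolute constant $c_1$.

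The main (minor) obstacle is really just notational bookkeeping: one must be sure that the $C_x^a$-norm of the product $g_1(x)t^{1-\rho}g_2(t)$ is exactly the product $\|g_1\|_{C^a[0,\pi]}\|t^{1-\rho}g_2\|_{C[0,T]}$ and not something weaker, which is why I would spell out the modulus-of-continuity computation above before invoking the lemma. No new analytic difficulty appears; the corollary is an immediate specialization of Lemma \ref{EAintL} to separable right-hand sides.
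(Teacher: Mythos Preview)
Your argument is correct and matches the paper's treatment: the paper states Corollary~\ref{g1g2} without proof, regarding it as an immediate consequence of Lemma~\ref{EAintL} once one notes that for $g(x,t)=g_1(x)g_2(t)$ the norm $\|t^{1-\rho}g\|_{C_x^a(\overline{\Omega})}$ factors as $\|g_1\|_{C^a[0,\pi]}\,\|t^{1-\rho}g_2\|_{C[0,T]}$. Your modulus-of-continuity computation is exactly the justification of that factorization (in fact it yields equality, not just $\leq$), so nothing is missing.
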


\begin{lem}\label{ep} Let $t^{1-\rho}g(x,t)\in C_x^a(\overline{\Omega})$. Then 
    \[
    \bigg|\bigg|\int\limits_0^t(t-s)^{\rho-1} \frac{\partial^2}{\partial x^2} E_{\rho, \rho}(i(t-s)^\rho A)g(x,s) ds   \bigg|\bigg|_{C(\overline{\Omega})}\leq C ||t^{1-\rho}g||_{C_x^a(\overline{\Omega})}.
    \]
\end{lem}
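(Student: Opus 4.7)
The plan is to expand $g$ in the sine Fourier basis of $A$, move $\partial^2/\partial x^2$ inside the operator termwise, and then estimate using a combination of the sharpened Mittag--Leffler bound \eqref{ml2}, Lemma \ref{Zyg} for the spatial Fourier sum, and a Beta-function computation for the $s$-integral.

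Writing $g(x,s)=\sum_{k\ge 1}g_k(s)\sin(kx)$ with $\lambda_k=k^2$, and setting $h:=t^{1-\rho}g\in C_x^a(\overline{\Omega})$ so that $g_k(s)=s^{\rho-1}h_k(s)$, one has
\[
\int_0^t(t-s)^{\rho-1}\frac{\partial^2}{\partial x^2}E_{\rho,\rho}(i(t-s)^\rho A)g(x,s)\,ds = -\sum_{k\ge 1}\sin(kx)\int_0^t(t-s)^{\rho-1}\lambda_k E_{\rho,\rho}(i\lambda_k(t-s)^\rho)g_k(s)\,ds.
\]
Fixing $\varepsilon\in(0,(a-1/2)/2)$ and multiplying \eqref{ml2} by $\lambda_k$ yields the pointwise kernel bound $(t-s)^{\rho-1}\lambda_k|E_{\rho,\rho}(i\lambda_k(t-s)^\rho)|\le C\lambda_k^{\varepsilon}(t-s)^{\varepsilon\rho-1}$. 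For fixed $s$, Lemma \ref{Zyg} with $\sigma=2\varepsilon<a-1/2$ bounds $\sum_k k^{2\varepsilon}|h_k(s)|$ by $C\|h\|_{C_x^a(\overline{\Omega})}$. Assembling termwise and integrating in $s$,
\[
\left|\int_0^t(t-s)^{\rho-1}\frac{\partial^2}{\partial x^2}E_{\rho,\rho}(i(t-s)^\rho A)g(x,s)\,ds\right| \le C\|t^{1-\rho}g\|_{C_x^a(\overline{\Omega})}\int_0^t(t-s)^{\varepsilon\rho-1}s^{\rho-1}\,ds,
\]
and the $s$-integral equals $B(\varepsilon\rho,\rho)\,t^{\varepsilon\rho+\rho-1}$.

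The main obstacle is to make the factor $t^{\varepsilon\rho+\rho-1}$ bounded uniformly on $\overline{\Omega}$ simultaneously with keeping the spatial sum convergent: boundedness as $t\to 0^+$ asks for $\varepsilon\ge(1-\rho)/\rho$, whereas Lemma \ref{Zyg} requires $\varepsilon<(a-1/2)/2$. When these ranges overlap (equivalently $\rho(a+3/2)>2$), any admissible $\varepsilon$ closes the argument and yields the claimed $C(\overline{\Omega})$-bound. Otherwise one would have to sharpen the step that sums over $k$, splitting into dyadic blocks $2^{n-1}<k\le 2^n$ (so that $\lambda_k\approx 4^n$) and, on each block, using the Cauchy--Schwarz/Zygmund estimate $\sum_{k\in(2^{n-1},2^n]}|h_k(s)|\le C\,2^{n/2}\,\omega_h(2^{-n-1};s)\le C\,2^{n(1/2-a)}\|h\|_{C_x^a(\overline{\Omega})}$ from the proof of Lemma \ref{Zyg}, before summing the geometric-type series in $n$ against the two different Mittag--Leffler bounds in the regimes $\lambda_k(t-s)^\rho\le 1$ and $\lambda_k(t-s)^\rho>1$.
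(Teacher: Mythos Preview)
Your route is exactly the paper's: expand in eigenfunctions, bound $\lambda_k(t-s)^{\rho-1}|E_{\rho,\rho}(i\lambda_k(t-s)^\rho)|$ via \eqref{ml2}, and control $\sum_k\lambda_k^{\varepsilon}|h_k(s)|$ by Lemma~\ref{Zyg}. You even correct a slip: since $\lambda_k=k^2$, Lemma~\ref{Zyg} requires $2\varepsilon<a-\tfrac12$, not $\varepsilon<a-\tfrac12$ as written in the paper. The paper's proof then simply asserts $|S_j(t)|\le C\|t^{1-\rho}g\|_{C_x^a}$ without ever displaying the residual factor $t^{\varepsilon\rho+\rho-1}$ that you correctly isolate.

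However, your proposed remedy for the range $\rho(a+3/2)\le 2$ cannot succeed. Take the single-mode datum $g(x,s)=s^{\rho-1}\sin x$, so that $t^{1-\rho}g=\sin x\in C_x^a(\overline\Omega)$. The quantity in the lemma equals
\[
-\sin x\int_0^t(t-s)^{\rho-1}E_{\rho,\rho}\bigl(i(t-s)^\rho\bigr)s^{\rho-1}\,ds
= -\sin x\;t^{2\rho-1}\!\int_0^1(1-u)^{\rho-1}E_{\rho,\rho}\bigl(it^\rho(1-u)^\rho\bigr)u^{\rho-1}\,du,
\]
and the $u$-integral tends to $B(\rho,\rho)/\Gamma(\rho)\neq0$ as $t\to0^+$. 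Hence the expression blows up like $t^{2\rho-1}$ whenever $\rho<\tfrac12$; no dyadic refinement of the $k$-sum can help, because there is only one mode. The $C(\overline\Omega)$-bound as stated is therefore false for small $\rho$, and the paper's proof shares the same gap---it simply does not show the step. In every place the paper actually invokes Lemma~\ref{ep}, either only continuity on the \emph{open} set $\Omega$ is required (Lemma~\ref{prob.w}, Theorem~\ref{AP}), or the left-hand side carries an additional factor $t^{1-\rho}$ (estimates \eqref{st1} and \eqref{st2}), which converts your $t^{\varepsilon\rho+\rho-1}$ into the harmless $t^{\varepsilon\rho}$. That weighted version is what the argument you and the paper give actually proves, and it is what should be used downstream.
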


\begin{proof}Let
    \[
        S_j(x,t)= \sum\limits_{k=1}^j
        \left[\int\limits_{0}^t(t-s)^{\rho-1} E_{\rho, \rho}(i\lambda_k(t-s)^\rho )g_k(s) ds\right] \lambda_k  v_k(x).
    \]

    Choose $\varepsilon$ so that $0<\varepsilon<a-1/2$ and apply the inequality (\ref{ml2}) to get 
    \[
    |S_j(t)|\leq C\sum\limits_{k=1}^j  \int\limits_0^t
    (t-s)^{\varepsilon\rho-1}s^{\rho-1}\lambda_k^{\varepsilon}|s^{1-\rho}g_k(s)|
    ds.
    \]
    By Lemma \ref{Zyg} we have
    \[
   |S_j(t)|\leq C ||t^{1-\rho}g||_{C_x^a(\overline{\Omega})}
    \]
    and since
     \[
    \int\limits_0^t(t-s)^{\rho-1} \frac{\partial^2}{\partial x^2} E_{\rho, \rho}(i(t-s)^\rho A)h(s) ds =\sum\limits_{j=1}^\infty S_j(t),
    \]
     the last inequality implies the assertion of the lemma.

    \end{proof}

\begin{lem}\label{prob.w}
Let $t^{1-\rho}G(x,t)\in C_x^a(\overline{\Omega})$ and  $\varphi\in C^a[0,\pi]$. Then the unique solution of the following initial-boundary value problem
\begin{equation}\label{prob2}
    \left\{
    \begin{aligned}
        &i \partial_t^\rho w(x,t) - w_{xx}(x,t) =G(x,t),\,\, 0 < t \leq T;\\
        &w(0,t)=w(\pi,t)=0, \,\,  0 < t \leq T;\\
        &\lim\limits_{t\rightarrow 0}J_t^{\rho-1} w(x,t) =\varphi(x),\,\, 0\leq x\leq \pi,
    \end{aligned}
    \right.
\end{equation}
has the form
\[
w(x,t)=t^{\rho-1} E_\rho(i t^\rho A) \varphi(x) - i \int\limits_0^t (t-s)^{\rho-1} E_{\rho, \rho} (i (t-s)^\rho A) G(x,s) ds.
\]
\end{lem}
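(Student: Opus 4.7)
The plan is to apply the Fourier method in the eigenbasis of $A=-d^2/dx^2$ and reduce the PDE to the scalar Cauchy problem solved by Lemma \ref{du}.

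First I would look for a solution in the form $w(x,t)=\sum_{k=1}^{\infty} w_k(t)\,v_k(x)$ and expand $G(x,t)=\sum_{k} G_k(t)v_k(x)$, $\varphi(x)=\sum_k \varphi_k v_k(x)$. The hypothesis $t^{1-\rho}G(x,t)\in C_x^a(\overline{\Omega})$ gives $t^{1-\rho}G_k(t)\in C[0,T]$, so applying Lemma \ref{du} with $\lambda=\lambda_k$, $y_0=\varphi_k$ and $g=G_k$ yields
\[
 w_k(t)=t^{\rho-1}E_{\rho,\rho}(i\lambda_k t^\rho)\varphi_k - i\int_0^t(t-s)^{\rho-1}E_{\rho,\rho}(i\lambda_k(t-s)^\rho)G_k(s)\,ds.
\]
Summing over $k$ and using the spectral definition of $E_{\rho,\mu}(itA)$ formally gives the claimed representation of $w(x,t)$.

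Next I would justify that this formal series is a classical solution in the sense of Definition \ref{def}. The boundary conditions $w(0,t)=w(\pi,t)=0$ are automatic since every $v_k$ vanishes at $0,\pi$. For the regularity, I would invoke Lemma \ref{EA} (applied to the time–independent function $\varphi$, which belongs to $C_x^a(\overline{\Omega})$ trivially) together with Lemmas \ref{EAintL} and \ref{ep} applied to $G$: the first gives $t^{1-\rho}w\in C(\overline{\Omega})$, while \ref{ep} delivers the uniform convergence of the series obtained after termwise application of $\partial^2/\partial x^2$, so that $w_{xx}\in C(\Omega)$. The corresponding bound on $\partial_t^\rho w$ then follows directly from the equation. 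The initial condition is checked via property (\ref{property}): applying $J_t^{\rho-1}$ termwise to $t^{\rho-1}E_{\rho,\rho}(i\lambda_k t^\rho)\varphi_k$ and using $E_{\rho,\rho}(0)=1/\Gamma(\rho)$ together with the uniform convergence of the series in $k$ gives $\lim_{t\to 0}J_t^{\rho-1}w(x,t)=\varphi(x)$; the Duhamel part contributes $0$ at $t=0$ because its $t^{1-\rho}$–multiple is $O(t^\rho)$ by Lemma \ref{EAintL}.

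For uniqueness I would take two solutions $w^{(1)},w^{(2)}$ of (\ref{prob2}) and set $v=w^{(1)}-w^{(2)}$; it solves the homogeneous problem with $G\equiv 0$ and $\varphi\equiv 0$. Taking the scalar product with $v_k$ and using selfadjointness of $A$ shows that each Fourier coefficient $v_k(t)$ satisfies the scalar Cauchy problem of Lemma \ref{du} with $g\equiv 0$, $y_0=0$, hence $v_k\equiv 0$ for every $k$ and therefore $v\equiv 0$ in $L_2(0,\pi)$, which, combined with the continuity built into Definition \ref{def}, forces $v\equiv 0$ pointwise.

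The main technical obstacle is the termwise differentiation: verifying that the series obtained after applying $\partial_t^\rho$ and $\partial_x^2$ converges uniformly on compact subsets of $\Omega$ so that the formal series genuinely satisfies the equation pointwise. This is precisely what the preliminary Lemmas \ref{EA}, \ref{EAintL} and \ref{ep} have been designed to provide, the key ingredient being the coarse estimate (\ref{ml2}) for $t^{\rho-1}E_{\rho,\rho}(-i\lambda t^\rho)$ combined with Bernshtein–type Fourier bound of Lemma \ref{Zyg} under the condition $a>1/2$; the small singularity $t^{\varepsilon\rho-1}$ at $t=0$ is harmless on $\Omega$ but must be tracked to keep the bound on $t^{1-\rho}w$ up to $t=0$.
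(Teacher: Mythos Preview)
Your proposal is correct and follows essentially the same route as the paper: Fourier expansion in the eigenbasis of $A$, reduction to the scalar Cauchy problem of Lemma~\ref{du}, and then Lemmas~\ref{EA}, \ref{EAintL}, \ref{ep} for the regularity, with uniqueness via completeness of $\{v_k\}$. Your treatment is in fact slightly more explicit than the paper's (you spell out the verification of the initial condition via~(\ref{property}) and the uniqueness argument), and you correctly write $E_{\rho,\rho}$ in the first term of $w_k(t)$, consistently with Lemma~\ref{du}.
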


\begin{proof}
According to the Fourier method, we will seek the solution to this problem in the form
\[
w(x,t) =\sum\limits_{k=1}^\infty T_k(t) v_k(x),
    \]
    where $T_k(t)$ are the unique solutions of the problems
    \begin{equation}\label{prob.T}
        \left\{
        \begin{aligned}
            &i \partial_t^\rho T_k + \lambda_k T_k(t) =G_k(t),\quad 0 < t \leq T;\\
            &\lim\limits_{t\rightarrow 0}J_t^{\rho-1} T_k(t) =\varphi_k,
        \end{aligned}
        \right.
    \end{equation}

Lemma \ref{du} implies
\[
T_k(t)=t^{\rho-1} E_\rho(i\lambda_k t^\rho) \varphi_k - i \int\limits_0^t (t-s)^{\rho-1} E_{\rho, \rho} (i\lambda_k (t-s)^\rho) G_k(s) ds.
\]
Hence the solution to problem (\ref{prob2}) has the form
\[
w(x,t)=t^{\rho-1} E_\rho(i t^\rho A) \varphi(x) - i \int\limits_0^t (t-s)^{\rho-1} E_{\rho, \rho} (i (t-s)^\rho A) G(x,s) ds.
\]
Note that the existence of the first term follows from estimate (\ref{E}), and the existence of the second term follows from Lemma \ref{EAintL}.

By Lemma \ref{ep} and estimate (\ref{AE}), we obtain: $w_{xx}(x,t)\in C(\Omega)$. Since $i\partial_t^\rho w(x,t)= -w_{xx}(x,t) + G(x,t)$, then $\partial_t^\rho w(x,t)\in C(\Omega)$.

The uniqueness of the solution can be proved by the standard
technique based on completeness of the set of eigenfunctions
$\{v_k(x)\}$ in $L_2(0,\pi)$ (see, e.g., \cite{AOLob}).

    \end{proof}

Let $t^{1-\rho}F(x,t)\in C(\overline{\Omega})$ and $g(x)\in C^a[0,\pi]$. Consider the Volterra integral equation
\begin{equation}\label{VE}
w(x,t)=F(x,t)+\int\limits_0^t  (t-s)^{\rho-1} E_{\rho, \rho} (i (t-s)^\rho A) g(x
) B[w(\cdot,s)]ds.
\end{equation}

\begin{lem}\label{VElem}
There is a unique solution $t^{1-\rho}w\in C(\overline{\Omega})$  to the integral equation (\ref{VE}).
\end{lem}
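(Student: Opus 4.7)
The plan is to apply a Picard-type fixed point argument to the affine Volterra operator
\[
(Kw)(x,t) = F(x,t) + \int\limits_0^t (t-s)^{\rho-1} E_{\rho, \rho}(i(t-s)^\rho A)\, g(x)\, B[w(\cdot,s)]\, ds
\]
on the Banach space $X = \{w : t^{1-\rho} w \in C(\overline{\Omega})\}$ equipped with the norm $\|w\|_X = \|t^{1-\rho} w\|_{C(\overline{\Omega})}$.

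First I would verify that $K$ is an affine Lipschitz self-map of $X$. Since $B$ is linear and bounded, the identity $t^{1-\rho} B[w(\cdot,t)] = B[t^{1-\rho} w(\cdot,t)]$ gives $\|t^{1-\rho} B[w(\cdot,\cdot)]\|_{C[0,T]} \leq b\, \|w\|_X$ for every $w \in X$, so the integrand $g(x) B[w(\cdot,s)]$ is precisely of the separated form covered by Corollary \ref{g1g2}. This supplies both the self-map property and the single-step Lipschitz estimate
\[
\|Kw - Kv\|_X \leq c_1 b\, \|g\|_{C^a[0,\pi]}\, \frac{T^\rho}{\rho}\, \|w - v\|_X.
\]

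The main obstacle is that this one-step bound is not a contraction for large $T$. To get around it, I would introduce $L = K - F$, keep the pointwise form of the estimate
\[
|t^{1-\rho}(Lw - Lv)(x,t)| \leq C_0\, t^{1-\rho} \int_0^t (t-s)^{\rho-1} s^{\rho-1} M(s)\, ds, \quad M(s) = \|s^{1-\rho}(w-v)(\cdot,s)\|_{C[0,\pi]},
\]
with $C_0 = c_1 b\, \|g\|_{C^a[0,\pi]}$, and iterate. Using the Beta-integral identity $\int_0^t (t-s)^{\rho-1} s^{(n+1)\rho-1}\, ds = t^{(n+2)\rho-1}\Gamma(\rho)\Gamma((n+1)\rho)/\Gamma((n+2)\rho)$, a telescoping induction yields
\[
\bigl| t^{1-\rho}(L^n(w-v))(x,t) \bigr| \leq C_0^n\, \frac{\Gamma(\rho)^{n+1}}{\Gamma((n+1)\rho)}\, T^{n\rho}\, \|w-v\|_X.
\]

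By Stirling's formula $\Gamma((n+1)\rho)$ grows faster than any geometric sequence in $n$, so the right-hand side tends to zero and some iterate $K^{n_0}$ becomes a strict contraction on $X$. The generalized Banach fixed point theorem then delivers a unique $w \in X$ with $Kw = w$, i.e.\ a unique solution of (\ref{VE}) in the required class; uniqueness alone is in fact already contained in the iterated bound, since any two solutions satisfy $w_1 - w_2 = L^n(w_1-w_2)$ for every $n$.
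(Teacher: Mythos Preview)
Your argument is correct and complete: the weighted space $X$ is a Banach space, Corollary~\ref{g1g2} (together with estimate~(\ref{E}) applied pointwise in $s$) gives exactly the one-step bound you write, the Beta-integral induction is accurate, and the asymptotics of $\Gamma((n+1)\rho)$ make some iterate of $K$ a strict contraction, so the generalized Banach theorem yields the unique fixed point in $X$.

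The paper takes a different route. Rather than iterating to force contractivity on all of $[0,T]$, it chooses $t_1$ small enough that $c_1 b\,\|g\|_{C^a[0,\pi]}\,t_1^\rho/\rho<1$, applies the ordinary Banach fixed point theorem on $C([0,\pi]\times[0,t_1])$ with weight $t^{1-\rho}$, and then propagates the solution to $[t_1,t_2]$, $[t_2,t_3]$, $\ldots$ by absorbing the already-determined part of the integral into the known term $F_1$; since the admissible step length stays bounded below, finitely many steps cover $[0,T]$. Your iterated-kernel approach is cleaner in that it avoids the interval subdivision and the bookkeeping of redefining $F$ at each step, and it gives a global Picard series representation of the solution for free. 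The paper's stepwise construction, on the other hand, is closer in spirit to the treatment in Kilbas--Srivastava--Trujillo cited there and makes the local-in-time existence explicit. Both are standard devices for Volterra equations with weakly singular kernels; neither dominates the other, but yours is the more economical of the two here. (One cosmetic point: the constant in your pointwise inequality is the $C$ from estimate~(\ref{E}), not $c_1=4C$; the factor $4$ in Lemma~\ref{EAintL} already accounts for the $s$-integration that you perform separately via the Beta function.)
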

\begin{proof} Equation (\ref{VE}) is similar to the equations considered in the book \cite{Kil}, p. 199, Eq. (3.5.4) and it is solved in essentially the same way. Let us remind the main points.

Equation (\ref{VE}) makes sense in any interval $[0, t_1]\in [0,T]$, $(0<t_1<T)$. Choose $t_1$ such that
\begin{equation}\label{t1}
c_1 b ||g||_{C^a[0,\pi]} \frac{t_1^\rho}{\rho}<1
\end{equation}
and prove the existence of a unique solution $t^{1-\rho}w(x, t)\in C([0,\pi]\times[0, t_1])$ to the equation (\ref{VE}) on the interval $[0, t_1]$ (here the constant $c_1$ is taken from estimate (\ref{EAint}), see Corollary \ref{g1g2}). For this we use the Banach fixed point theorem for the space $C([0,\pi]\times[0, t_1])$ with the weight function $t^{1-\rho}$  (see, e.g., \cite{Kil}, theorem 1.9, p. 68), where the distance is given by
\[
d(w_1, w_2) = ||t^{1-\rho}[w_1(x,t)- w_2(x,t)]||_{C([0,\pi]\times [0, t_1])}.
\]
Let us denote the right-hand  side of equation (\ref{VE}) by $Pw(x,t)$, where $P$ is the corresponding linear operator. To apply the Banach fixed point theorem we have to prove the following:

(a) if $t^{1-\rho}w(x,t)\in C([0,\pi]\times[0, t_1])$, then $t^{1-\rho}Pw(x,t)\in C([0,\pi]\times[0, t_1])$;

(b) for any $t^{1-\rho}w_1, t^{1-\rho}w_2 \in C([0,\pi]\times[0, t_1])$ one has
\[
d(Pw_1,Pw_2)\leq \delta\cdot d(w_1,w_2), \,\, \delta<1.
\]

Lemmas \ref{EA} and \ref{EAintL} imply condition (a). On the other hand, thanks to (\ref{EAint}) (see Corollary \ref{g1g2}) we arrive at
\[
\bigg|\bigg|t^{1-\rho}\int\limits_0^t  (t-s)^{\rho-1} E_{\rho, \rho} (i (t-s)^\rho A) g(x) B[w_1(\cdot,s)-w_2(\cdot,s)]ds\bigg|\bigg|_{C([0,\pi]\times [0, t_1])}\leq\delta\cdot d(w_1,w_2),
\]
where $\delta =c_1 b ||g||_{C^a[0,\pi]} \frac{t_1^\rho}{\rho}<1$ since condition (\ref{t1}).

Hence by the Banach fixed point theorem, there exists a unique solution $t^{1-\rho} w^\star (x,t)\in C([0,\pi]\times [0, t_1])$ to equation (\ref{VE}) on the interval $[0, t_1]$ and this solution is a limit of the convergent
sequence $w_n(x,t)=P^n F(x,t)= P P^{n-1} F(x,t)$:
\[
\lim\limits_{n\rightarrow \infty} d(w_n(x,t),w^\star(x,t))=0.
\]

Next we consider the interval $[t_1, t_2]$, where $t_2=t_1+l_1<T$, and $l_1>0$. Rewrite the equation (\ref{VE}) in the form

\[
w(x,t)=F_1(x,t)+\int\limits_{t_1}^t  (t-s)^{\rho-1} E_{\rho, \rho} (i (t-s)^\rho A) g(x) B[w(\cdot,s)]ds,
\]
where
\[
F_1(x,t)=F(x,t)+ \int\limits_{0}^{t_1}  (t-s)^{\rho-1} E_{\rho, \rho} (i (t-s)^\rho A) g(x) B[w(\cdot,s)]ds,
\]
is a known function, since the function $w(x,t)$ is uniquely defined on the interval $[0, t_1]$. Using the same arguments
as above, we derive that there exists a unique solution $t^{1-\rho}w^\star(x,t)\in C([0,\pi]\times[t_1, t_2])$ to equation (\ref{VE}) on the interval $[t_1, t_2].$
Taking the next interval $[t_2, t_3]$, where $t_3=t_2+l_2<T$, and $l_2>0$, and repeating this process (obviously, $l_n>l_0>0$), we conclude that there exists a unique solution $t^{1-\rho}w^\star(x,t)\in C([0,\pi]\times[0, T])$ to equation (\ref{VE}) on the interval $[0, T]$, and this solution is a limit of the convergent
sequence $t^{1-\rho}w_n(x,t)\in C([0,\pi]\times[0, T])$:
\begin{equation}\label{wn}
\lim\limits_{n\rightarrow \infty} ||t^{1-\rho} [w_n(x,t)-w^\star(x,t)]||_{C(\overline{\Omega})}=0,
\end{equation}
with the choice of certain $w_n$ on each $[0, t_1], \cdots [t_{L-1}, T]$.
\end{proof}

We need the following kind of Gronwall's inequality:
\begin{lem} Let $0<\rho<1$. Assume that the non-negative  function $h(t)\in C[0,T]$ and the positive constants $K_0$ and $K_1$ satisfy
    \[
    h(t)\leq K_0+ K_1\int\limits_0^t(t- s)^{\rho-1}s^{\rho-1} h(s) ds
    \]
    for all $t\in [0, T]$. Then there exists a positive constant $C_{\rho, T}$, depending only on $\rho$, $K_2$ and $T$ such that
    \begin{equation}\label{Gron}
        h(t)\leq K_0 \cdot C_{\rho, T} .
    \end{equation}

\end{lem}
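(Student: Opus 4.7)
The plan is to iterate the integral inequality and bound the resulting series of iterates by a Mittag--Leffler-type majorant. With $(\mathcal{I}\phi)(t):=K_1\int_0^t(t-s)^{\rho-1}s^{\rho-1}\phi(s)\,ds$, the hypothesis reads $h\le K_0+\mathcal{I}h$. Since $h\in C[0,T]$ is bounded, substituting the inequality into its own right-hand side $n$ times produces the Neumann-type expansion
\[
h(t)\le K_0\sum_{j=0}^{n-1}\mathcal{I}^{\,j}[\mathbf{1}](t)+\mathcal{I}^{\,n}[h](t),
\]
and the strategy is to pass to the limit $n\to\infty$.

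To estimate the iterates I would use the recursion $K^{(j+1)}(t,s)=\int_s^t K(t,r)K^{(j)}(r,s)\,dr$ with $K(t,s)=K_1(t-s)^{\rho-1}s^{\rho-1}$. The change of variable $r=s+\tau(t-s)$ combined with the monotonicity inequality $(s+\tau(t-s))^{\rho-1}\le s^{\rho-1}$ (valid because $\rho<1$) gives, by induction on $j$,
\[
K^{(j)}(t,s)\le K_1^{j}\,\frac{\Gamma(\rho)^{j}}{\Gamma(j\rho)}\,s^{j(\rho-1)}(t-s)^{j\rho-1}.
\]
The decisive feature is the factor $\Gamma(j\rho)^{-1}$: since $\Gamma(j\rho)$ grows super-exponentially, the series $\sum_j\mathcal{I}^{\,j}[\mathbf{1}](t)$ converges uniformly on $[0,T]$ to a continuous function, which I would identify with $C_{\rho,T}$. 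The remainder $\mathcal{I}^{\,n}[h](t)\le\|h\|_\infty\mathcal{I}^{\,n}[\mathbf{1}](t)$ tends to zero, and in the limit one obtains $h(t)\le K_0\,C_{\rho,T}$.

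The main obstacle is the scalar integral $\int_0^t s^{j(\rho-1)}(t-s)^{j\rho-1}\,ds$ produced by the kernel bound: it is a Beta-type integral convergent only while $j(\rho-1)>-1$, so for $\rho\le 1/2$ the direct pointwise estimate loses meaning after finitely many iterations. I would handle this by a bootstrap on $M(t):=\sup_{s\in[0,t]}h(s)$, which is continuous with $M(0)\le K_0$ by the inequality evaluated at $t=0$. Feeding the pointwise estimate $h(t)\le K_0+K_1 B(\rho,\rho)\,t^{2\rho-1}M(t)$ into a short-interval contraction argument yields a uniform bound for $M$ on an initial subinterval; one then extends to $[0,T]$ by isolating, at each step, the already-controlled part of the integral over earlier times and repeating the contraction argument on the increment. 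Finitely many such steps cover $[0,T]$ and absorb all the quantitative data into a single constant $C_{\rho,T}$ depending only on $\rho$, $K_1$ and $T$, in accordance with the standard pattern for weakly singular Gronwall inequalities of Henry's type.
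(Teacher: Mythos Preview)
Your Neumann-series route differs from the paper's and is more elaborate than needed. The paper does not attempt to sum the full series: it iterates the hypothesis only finitely many times. After one iteration and Fubini, the inner Beta-type integral $\int_\xi^t(t-s)^{\rho-1}(s-\xi)^{\rho-1}\,ds$ contributes a factor $(t-\xi)^{2\rho-1}$, so each pass raises the $(t-s)$-exponent by $\rho$. Once that exponent is nonnegative (after roughly $\lceil 1/\rho\rceil$ steps) the kernel is bounded on $[0,T]$, and the paper finishes with the ordinary Gronwall lemma via the standard logarithmic-derivative argument, obtaining $h(t)\le K_0\,e^{C_\rho t}$.

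More importantly, your workaround for $\rho\le 1/2$ has a genuine gap. The ``short-interval contraction'' on $M(t)=\sup_{[0,t]}h$ needs the coefficient $K_1 B(\rho,\rho)\,t^{2\rho-1}$ to be smaller than $1$ on some initial interval $[0,t_1]$. But for $\rho<1/2$ one has $t^{2\rho-1}\to+\infty$ as $t\to 0^+$, so no such interval exists: taking the supremum of $h(\tau)\le K_0+K_1 B(\rho,\rho)\,\tau^{2\rho-1}M(\tau)$ over $\tau\in(0,t_1]$ gives a vacuous bound, and the contraction never closes (for $\rho=1/2$ it already fails whenever $K_1\pi\ge 1$). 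Thus the very difficulty you correctly flagged for the iterated-kernel estimate reappears, unchanged, in the workaround. The missing idea is the paper's: instead of summing the series or contracting in sup-norm, iterate just enough times to remove the weak singularity in the $(t-s)$-factor and then reduce to the non-singular Gronwall inequality.
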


Usually Gronwall's inequality is formulated with a continuous function $k(s)$ instead of $K_1(t-s)^{\rho-1}s^{\rho-1}$. However, estimate (\ref{Gron}) is proved in a similar way to the well-known Gronwall inequality. For the convenience of the reader, we present a proof of estimate (\ref{Gron}).

\begin{proof}
	Iterating the hypothesis of Gronwall's inequality gives
	\[
	 h(t)\leq K_0+K_0 K_1 \int\limits_0^t(t- s)^{\rho-1}s^{\rho-1}  ds + K_1^2\int\limits_0^t(t- s)^{\rho-1}s^{\rho-1}\int\limits_0^s(s-\xi)^{\rho-1} \xi^{\rho-1}h(\xi)d\xi ds
	\]
	\[
	\leq K_{\rho, T}+K_1^2 \int\limits_0^t u(\xi) \xi^{\rho-1} \int\limits_\xi^t (t-s)^{\rho-1}(s-\xi)^{\rho-1} s^{\rho-1} ds d\xi,
	\]
	where
	\[
	K_{\rho, T}=K_0+ K_0 K_1 \int\limits_0^T(t- s)^{\rho-1}s^{\rho-1}  ds.
	\]
	For the inner integral we have (see (\ref{int}))
		\[
	\int\limits_\xi^t (s-\xi)^{\rho-1}(t-s)^{\rho-1}ds =	\int\limits_0^{t-\xi} y^{\rho-1}(t-\xi-y)^{\rho-1}dy\leq \frac{2^{2(1-\rho)}}{\rho} (t-\xi)^{2\rho-1}.
	\]
	Now the hypothesis is
	 \[
	h(t)\leq K_0+ K_1^2 \frac{2^{2(1-\rho)}}{\rho}\int\limits_0^t(t- s)^{2\rho-1} h(s) ds.
	\]
	
By repeating this process so many times that $k\rho>1$, we make sure that there is a positive constant $C_\rho=C(\rho, K_2, T)>0$ such that 
		\[
	h(t)\leq K_0+ C_\rho\int\limits_0^t h(s) ds,
	\]
or	
	 \[
    \frac{h(\xi)}{K_0+C_\rho\int\limits_0^\xi h(s) ds}\leq 1.
    \]
    Multiply this by $C_\rho$ to get
    \[
    \frac{d}{d\xi}\ln \bigg(K_0+C_\rho\int\limits_0^\xi h(s) ds\bigg)\leq C_\rho.
    \]
    Integrate from $\xi=0$ to $\xi=t$ and exponentiate to obtain
    \[
    K_0+C_\rho\int\limits_0^t h(s) ds\leq K_0 e^{C_\rho t}.
    \]
    Finally note that the left side is $\geq h(t)$.
\end{proof}

\section{Auxiliary problem and proof of Theorem \ref{main}}

Let us consider the following auxiliary initial-boundary value  problem

\begin{equation}\label{prob2}
    \left\{
    \begin{aligned}
        &i \partial_t^\rho \omega(x,t) - \omega_{xx}(x,t) = - i \mu(t) q''(x) + f(x,t),\quad (x,t)\in \Omega;\\
        &\omega(0, t)=\omega(\pi,t)=0, \quad 0\leq t\leq T;\\
        &\lim\limits_{t\rightarrow 0}J_t^{\rho-1}\omega(x,t) =\varphi(x), \quad 0\leq x\leq \pi,
    \end{aligned}
    \right.
\end{equation}
where function $\mu(t)$ is the unique solution of the Cauchy problem:
\begin{equation}\label{prob2a}
    \left\{
    \begin{aligned}
        &\partial_t^\rho \mu(t) = p(t),\quad 0 < t \leq T;\\
        &\lim\limits_{t\rightarrow 0}J_t^{\rho-1}\mu(t) =0.
    \end{aligned}
    \right.
\end{equation}

Note that the solution to the Cauchy problem (\ref{prob2a}) has the form (see, e.g., \cite{AshCab}):
\[
\mu(t)=\frac{1}{\Gamma(\rho)}\int\limits_{0}^t  (t-s)^{\rho-1}  p(s)ds,
\]

\begin{defin}\label{def} A functions $\omega(x,t)$ with the properties
	\begin{enumerate}
		\item
		$\partial_t^\rho \omega(x,t), \omega_{xx}(x,t)\in C(\Omega)$,
		\item$t^{1-\rho}\omega_{xx}(x,t)\in C((0, \pi) \times [0,T])$,
		\item
		$t^{1-\rho}\omega(x,t)\in C(\overline{\Omega})$,
	\end{enumerate}
and satisfying conditions
	(\ref{prob2a})  is called \textbf{the
		solution} of  problem (\ref{prob2a}).
\end{defin}

\begin{lem}\label{auxiliary}
	Let $\omega(x,t)$ be a solution of problem  (\ref{prob2}). Then the unique solution $\{u(x,t), p(t)\}$ to the inverse problem (\ref{prob1}), (\ref{ad}) has the form
	\begin{equation}\label{u}
		u(x,t)=\omega(x,t)-i \mu(t) q(x),
	\end{equation}
	\begin{equation}\label{p}
		p(t)=\frac{i}{B[ q(x)]}\{\partial^\rho_t\psi(t)-B[\partial^\rho_t \omega(\cdot,t)]\},
	\end{equation}
	where
	\begin{equation}\label{mu}
		\mu(t)=\frac{i}{B[ q(x)]}[\psi(t)- B[\omega(\cdot,t)]].
	\end{equation}

\end{lem}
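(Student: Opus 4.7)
The plan is to prove the lemma by direct substitution, exploiting the linearity of the bounded functional $B$ and the fact that $\partial_t^\rho$ commutes with $B$ (since $B$ acts only in the $x$ variable). The algebraic formulas (\ref{u})--(\ref{mu}) are themselves forced on us by the overdetermination condition, so the verification is essentially bookkeeping, while uniqueness will reduce to the uniqueness already available for the auxiliary problem (\ref{prob2}).

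First I would verify that $\{u,p\}$ defined via (\ref{u}) and (\ref{p}) satisfies the PDE in (\ref{prob1}). Since $q(x)$ is $t$-independent and $\partial_t^\rho\mu=p$ by (\ref{prob2a}), one has $i\partial_t^\rho u = i\partial_t^\rho\omega + q\,\partial_t^\rho\mu = i\partial_t^\rho\omega + pq$, while on the spatial side $u_{xx} = \omega_{xx} - i\mu q''$. Subtracting and invoking the equation in (\ref{prob2}) gives $i\partial_t^\rho u - u_{xx} = -i\mu q'' + f + pq + i\mu q'' = pq+f$, which is exactly the PDE required. The homogeneous boundary conditions for $u$ follow from those of $\omega$ together with $q(0)=q(\pi)=0$, which is built into the class $C_2^a[0,\pi]$. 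The initial condition is obtained by applying $J_t^{\rho-1}$ to (\ref{u}) and taking $t\to 0$: the $\omega$--term gives $\varphi(x)$ by (\ref{prob2}), while the $\mu q$--term vanishes because $\lim_{t\to 0}J_t^{\rho-1}\mu(t)=0$ by (\ref{prob2a}).

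Next I would derive (\ref{mu}) and (\ref{p}) from the overdetermination. Applying $B$ to (\ref{u}) yields $B[u(\cdot,t)] = B[\omega(\cdot,t)] - i\mu(t)\,B[q]$, and equating this to $\psi(t)$ and solving for $\mu$ produces (\ref{mu}); the hypothesis $B[q]\neq 0$ from Theorem \ref{main} is precisely what makes this inversion legitimate. Applying $\partial_t^\rho$ to both sides of (\ref{mu}) and using that $B$ commutes with $\partial_t^\rho$, together with $\partial_t^\rho\mu=p$, produces (\ref{p}). The converse direction gives uniqueness: given any solution $\{\tilde u,\tilde p\}$ of the inverse problem, define $\tilde\mu$ via (\ref{prob2a}) with datum $\tilde p$ and set $\tilde\omega:=\tilde u+i\tilde\mu q$; an identical calculation shows $\tilde\omega$ solves (\ref{prob2}) and satisfies the nonlocal identity (\ref{mu}). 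Since (\ref{prob2}) combined with (\ref{mu}) reduces to a Volterra integral equation of the type treated in Lemma \ref{VElem}, the auxiliary solution is unique, so $\tilde\omega=\omega$ and hence $\tilde u=u$, $\tilde p=p$.

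The subtlest point I anticipate is the consistency of the two descriptions of $\mu$: namely, its definition through the Cauchy problem (\ref{prob2a}) must agree with the algebraic identity (\ref{mu}) at $t=0$. Using property (\ref{property}), the requirement $\lim_{t\to 0}J_t^{\rho-1}\mu=0$ forces the compatibility $\lim_{t\to 0}J_t^{\rho-1}\psi(t)=B[\varphi]$ between $\psi$ and $\varphi$, which I would verify by applying the bounded functional $B$ to the initial condition for $\omega$ and using continuity of $B$. The required regularity of $u$ and $p$ in the sense of Definition \ref{def} is routine: it follows from the corresponding regularity of $\omega$ (built into its being a solution of (\ref{prob2})), the hypothesis $t^{1-\rho}\partial_t^\rho\psi\in C[0,T]$, and boundedness of $B$.
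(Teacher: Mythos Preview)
Your proposal is correct and follows essentially the same approach as the paper's proof: both verify the PDE, boundary, and initial conditions by direct substitution of (\ref{u}) using $\partial_t^\rho\mu=p$, and both derive (\ref{mu}) and (\ref{p}) by applying $B$ to (\ref{u}) and then $\partial_t^\rho$. Your treatment is slightly more explicit on two points the paper leaves implicit: the converse direction for uniqueness (reducing to Lemma \ref{VElem}) and the compatibility of (\ref{prob2a}) with (\ref{mu}) at $t=0$; these are welcome clarifications but do not constitute a different method.
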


\begin{proof} Substitute the function  $u(x,t)$, defined by equality (\ref{u}), into the equation in (\ref{prob1}). Then
	\[
	i \partial_t^\rho \omega(x,t) + \partial_t^\rho \mu(t) q(x) - \omega_{xx}(x,t) + i \mu(t) q''(x)= p(t) q(x)+f(x,t).
	\]
	Since $\partial_t^\rho \mu(t) = p(t)$ (see (\ref{prob2a})), we obtain equation (\ref{prob2}), i.e. function $u(x,t)$, defined by (\ref{u}) is a solution of the equation in (\ref{prob1}).  As for the initial condition, again by virtue of (\ref{prob2a}) we get
	\[
	\lim\limits_{t\rightarrow 0}J_t^{\rho-1} u(x,t)=\lim\limits_{t\rightarrow 0}J_t^{\rho-1}\omega(x,t)-i \lim\limits_{t\rightarrow 0}J_t^{\rho-1}\mu(t) q(x) =\lim\limits_{t\rightarrow 0}J_t^{\rho-1}\omega(x,t)=\varphi(x).
	\]
	On the other hand, conditions $q(0)=q(\pi)=0$ imply  $u(0, t)=u(\pi, t)=0$, $0\leq t\leq T$.
	
	From  Definition \ref{def2} of solution $\omega(x,t)$ and the property of the functions $\mu(t)$ and $q(x)$ it immediately follows that the function $u(x,t)$ satisfies the requirements: 	$\partial_t^\rho u(x,t), u_{xx}(x,t)\in C(\Omega)$, $t^{1-\rho}u(x,t)\in C(\overline{\Omega})$.
	
	Thus function $u(x,t)$, defined as (\ref{u}) is a solution of the initial-boundary value problem (\ref{prob1}).
	
	Let us prove equation (\ref{p}). Rewrite (\ref{u}) as
	\[
	iq(x) \mu(t)=\omega(x,t)- u(x,t).
	\]
	Apply (\ref{ad}) to obtain
	\[
	i \mu(t) B[ q(x)] =B[\omega(\cdot,t)] - \psi(t),
	\]
	or, since $B[ q(x)]\neq 0$, we get (\ref{mu}). Finally, using equality $\partial_t^\rho \mu(t)= p(t)$, we have
	\[
	p(t)=\frac{i}{B[ q(x)]}[\partial_t^\rho \psi(t)- B[\partial_t^\rho \omega(\cdot,t)]],
	\]
	which coincides with (\ref{p}). Moreover, from the definition of solution $\omega(x,t)$ of problem (\ref{prob2}) and the property of function $\psi(t)$ one has $t^{1-\rho} p(t)\in C[0,T]$.
\end{proof}

Thus, to solve the inverse problem (\ref{prob1}), (\ref{ad}), it is sufficient to solve the initial-boundary value problem (\ref{prob2}).

\begin{thm}\label{AP}
    Under the assumptions of Theorem \ref{main}, problem (\ref{prob2}) has a unique solution.
\end{thm}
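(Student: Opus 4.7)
The strategy is to reduce problem (\ref{prob2}) to a Volterra integral equation of exactly the form (\ref{VE}), so that existence and uniqueness follow directly from Lemma \ref{VElem}. First, I would apply Lemma \ref{prob.w} with source term $G(x,t)=-i\mu(t)q''(x)+f(x,t)$ to represent any candidate solution $\omega$ as
\[
\omega(x,t) = t^{\rho-1} E_\rho(it^\rho A)\varphi(x) - i\int_0^t (t-s)^{\rho-1} E_{\rho,\rho}(i(t-s)^\rho A)\, G(x,s)\, ds,
\]
and then eliminate $\mu(s)$ using the identity $\mu(s)=\frac{i}{B[q]}[\psi(s)-B[\omega(\cdot,s)]]$ from (\ref{mu}). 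The resulting closed equation for $\omega$ takes the form
\[
\omega(x,t) = F(x,t) + \int_0^t (t-s)^{\rho-1} E_{\rho,\rho}(i(t-s)^\rho A)\, g(x)\, B[\omega(\cdot,s)]\, ds,
\]
where $g(x)=\frac{i}{B[q]}\, q''(x)$ and $F(x,t)$ gathers the initial-data term together with the two integrals involving $f$ and $\psi$. This matches (\ref{VE}) exactly.

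Next, I would check the hypotheses of Lemma \ref{VElem}. The condition $q\in C_2^a[0,\pi]$ from Theorem \ref{main} gives $q''\in C^a[0,\pi]$ with $q''(0)=q''(\pi)=0$, so $g\in C^a[0,\pi]$ (and $B[q]\neq 0$ makes the definition legitimate). To obtain $t^{1-\rho}F\in C(\overline{\Omega})$, I would apply estimate (\ref{E}) of Lemma \ref{EA} to the $\varphi$-term and Lemma \ref{EAintL} (together with Corollary \ref{g1g2} for the $\psi q''$-integral) to the remaining two pieces, using $\varphi\in C^a[0,\pi]$, $t^{1-\rho}f\in C_x^a(\overline{\Omega})$, and $t^{1-\rho}\psi\in C[0,T]$. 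Lemma \ref{VElem} then yields a unique $\omega$ with $t^{1-\rho}\omega\in C(\overline{\Omega})$; uniqueness of $\omega$ as a solution of (\ref{prob2}) is inherited from uniqueness of the Volterra solution because any solution of (\ref{prob2}) necessarily satisfies the displayed integral equation.

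Finally, I would verify the remaining regularity required by the definition of a solution of (\ref{prob2}): $\omega_{xx}\in C(\Omega)$ with $t^{1-\rho}\omega_{xx}\in C((0,\pi)\times[0,T])$, and $\partial_t^\rho \omega\in C(\Omega)$. Differentiating the representation twice in $x$ under the integral, estimate (\ref{AE}) handles the $\varphi$-contribution and Lemma \ref{ep} handles the integrals involving $f$ and the $B[\omega]$-term, giving continuity of $\omega_{xx}$ on $\Omega$; the $t^{1-\rho}$-weighted continuity up to $t=0$ follows from the same estimates. The regularity of $\partial_t^\rho\omega$ then reads off directly from the equation. The main obstacle is bookkeeping of the regularity of $\mu$: since $\mu$ is now defined a posteriori from the constructed $\omega$ via (\ref{mu}), I must check that $t^{1-\rho}\mu\in C[0,T]$ (which follows from $t^{1-\rho}\omega\in C(\overline{\Omega})$, boundedness of $B$, and the assumption on $\psi$), so that $-i\mu(t)q''(x)$ qualifies as a legitimate source in Lemma \ref{prob.w} and the reduction is internally consistent.
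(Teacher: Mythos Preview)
Your proposal is correct and follows essentially the same route as the paper: reduce (\ref{prob2}) via Lemma \ref{prob.w} and the substitution (\ref{mu}) to a Volterra equation of the form (\ref{VE}), verify $t^{1-\rho}F\in C(\overline{\Omega})$ using (\ref{E}) and Lemma \ref{EAintL}, invoke Lemma \ref{VElem}, and then check the remaining regularity with (\ref{AE}) and Lemma \ref{ep}. The paper carries out exactly these steps (its equation (\ref{VE2}) is your closed integral equation, up to a harmless scalar factor in the kernel), and it closes the loop just as you do by noting that once $t^{1-\rho}\omega\in C(\overline{\Omega})$ is in hand, the source $G$ indeed lies in $C_x^a(\overline{\Omega})$ so the use of Lemma \ref{prob.w} is justified a posteriori.
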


\begin{proof} Let
\begin{equation}\label{Gsep}
    G(x,s)=\frac{i}{B[ q(x)]} (B[\omega(\cdot,s)]-\psi(s))q''(x) +f(x,s)
\end{equation}
and suppose that $s^{1-\rho} G(x,s)\in C_x^a(\overline{\Omega})$. Then by Lemma \ref{prob.w} problem (\ref{prob2}) is equivalent to the integral equation
    \[
    \omega(x,t)=t^{\rho-1} E_\rho (i t^\rho A)\varphi(x)- i \int\limits_0^t (t-s)^{\rho-1} E_{\rho,\rho} (i(t-s)^\rho A) G(x,s) ds.
    \]
        Rewrite this equation as
    \begin{equation}\label{VE2}
        \omega(x,t)=F(x,t)+\int\limits_0^t (t-s)^{\rho-1} E_{\rho, \rho} (i (t-s)^\rho A) \frac{ q''(x)}{B[ q(x)]} B[\omega(\cdot,s)]ds,
    \end{equation}
    where
    \[
    F(x,t)=t^{\rho-1} E_\rho (i t^\rho A)\varphi(x)- i \int\limits_0^t (t-s)^{\rho-1} E_{\rho,\rho} (i(t-s)^\rho A) \bigg[-\frac{i q''(x)}{B[ q(x)]} \psi(s)+f(x,s)\bigg] ds.
    \]
    In order to apply Lemma \ref{VElem} to equation (\ref{VE2}), we show that $t^{1-\rho}F(x,t)\in C(\overline{\Omega})$. Indeed, by estimate (\ref{E}) one has $ E_\rho (i t^\rho A)\varphi(x) \in C(\overline{\Omega}) $.
According to the conditions of the Theorem \ref{main}  $h(x,s)=s^{1-\rho}[-\frac{i q''(x)}{B[ q(x)]} \psi(s)+f(x,s)]\in C_x^a(\overline{\Omega})$. Therefore, by virtue of estimate (\ref{EAint}), the second term of function $t^{1-\rho}F(x,t)$ also belongs to the class $C(\overline{\Omega})$.
Hence, by virtue of Lemma \ref{VElem}, the Volterra equation (\ref{VE2}) has a unique solution $t^{1-\rho}\omega(x,t)\in C(\overline{\Omega})$.

Let us show that $\partial_t^\rho \omega(x,t),\, \omega_{xx}(t)\in C(\Omega)$. First we consider $F_{xx}(x,t)$ and note, that by estimate (\ref{AE}) we have $\frac{\partial^2}{\partial x^2}  E_\rho (i t^\rho A)\varphi (x)\in C([0, \pi]\times(0, T])$. Since function $h$ defined above, belongs to the class $C_x^a(\overline{\Omega})$, then by Lemma \ref{ep}, the second term of  function $F_{xx}(x,t)$ belongs to $C(\Omega)$ and satisfies the estimate:
\[
\bigg|\bigg|t^{1-\rho}\int\limits_0^t (t-s)^{\rho-1} \frac{\partial^2}{\partial x^2}  E_{\rho,\rho} (i(t-s)^\rho A) \bigg[-\frac{i q''(x)}{B[ q(x)]} \psi(s)+f(x,s)\bigg] ds \bigg|\bigg|_{C(\overline{\Omega})}
\]

    \begin{equation}\label{st1}
    \leq C \bigg[\bigg|\bigg|t^{1-\rho} \frac{q''(x)}{B[ q(x)]}\psi(t)\bigg|\bigg|_{C^a_x(\overline{\Omega})}+ ||t^{1-\rho}f(x,t)||_{C^a_x(\overline{\Omega})} \bigg]\leq C_{a, q, B} \big[||t^{1-\rho}\psi||_{C[0, T]}+||t^{1-\rho}f(x,t)||_{C^a_x(\overline{\Omega})}\big].
\end{equation}

We pass to the second term on the right-hand side of equality (\ref{VE2}). Since $t^{1-\rho}\omega(x,t)\in C(\overline{\Omega})$, the conditions of Theorem \ref{main} imply that $s^{1-\rho}\frac{q''(x)}{B[q(x)]}B[\omega(\cdot,s)]\in C_x^a(\overline{\Omega})$. Then again by Lemma \ref{ep}, this term belongs to $C(\overline{\Omega})$ and satisfies the estimate:
\[
\bigg|\bigg|t^{1-\rho}\int\limits_0^t (t-s)^{\rho-1} \frac{\partial^2}{\partial x^2} E_{\rho,\rho} (i(t-s)^\rho A) \frac{ q''(x)}{B[ q(x)]} B[\omega(\cdot,s)] ds\bigg|\bigg|_{C(H)}
\]
\begin{equation}\label{st2}
    \leq C\bigg|\bigg| \frac{q''(x)}{B[ q(x)]}B[t^{1-\rho}\omega(\cdot,t)]\bigg|\bigg|_{C(\overline{\Omega})}\leq C_{a, q, B} ||t^{1-\rho}\omega(x,t)||_{C(\overline{\Omega})}.
\end{equation}

Thus, $\omega_{xx}(x,t)\in C((0, T]; H)$. On the other hand, by virtue of equation (\ref{prob2}) and the conditions of Theorem \ref{main}, we will have
\[
\partial_t^\rho \omega(x,t) = \omega_{xx}(x,t) - i \mu(t)  q''(x) + f(x,t)\in C(\overline{\Omega})).
\]
The fact that here $\mu\in C[0,T]$ follows again from the conditions of the Theorem \ref{main} and equality (\ref{mu}).

It remains to show that $t^{1-\rho} G(x,t)\in C_x^a(\overline{\Omega})$. But this fact follows from the conditions of Theorem \ref{main} and the already proven assertion: $t^{1-\rho}\omega(x,t)\in C(\overline{\Omega})$.

\end{proof}

As noted above Theorem \ref{main} is an immediate consequence of Lemma \ref{auxiliary} and Theorem \ref{AP}.

\section{Proof of Theorem \ref{estimate}}

First we prove the following statement on the stability of the solution to problem (\ref{prob2}), (\ref{prob2a}).

\begin{thm}\label{estimate2} Let assumptions of Theorem \ref{estimate} be satisfied. Then  the solution to problem (\ref{prob2}), (\ref{prob2a}) obeys the stability estimate
    \begin{equation}\label{se}
        ||t^{1-\rho}\partial_t^\rho \omega||_{C(\overline{\Omega})}\leq C_{\rho, q, B} \big[   ||\varphi_{xx}||_{C^a[0,\pi]}  + ||t^{1-\rho}\psi||_{C[0,T]} + ||t^{1-\rho}f(x,t)||_{C^a_x(\overline{\Omega})}\big],
    \end{equation}
    where $C_{\rho, q, B, \epsilon}$ is a constant, depending only on $\rho, q$ and $B$.
\end{thm}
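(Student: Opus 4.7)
The plan is to recover the bound on $||t^{1-\rho}\partial_t^\rho\omega||_{C(\overline{\Omega})}$ from the pointwise identity $\partial_t^\rho \omega = -i\omega_{xx} - \mu(t)q''(x) - if(x,t)$ obtained by solving the equation in (\ref{prob2}) for $\partial_t^\rho\omega$. This reduces the task to bounding $||t^{1-\rho}\omega_{xx}||_{C(\overline{\Omega})}$ and $||t^{1-\rho}\mu||_{C[0,T]}$; the latter, via (\ref{mu}), only requires a prior bound on $||t^{1-\rho}\omega||_{C(\overline{\Omega})}$. All three ingredients will be obtained by retracing the existence proof of Theorem \ref{AP} while tracking the data norms.

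First I would estimate $||t^{1-\rho}\omega||_{C(\overline{\Omega})}$ from the Volterra equation (\ref{VE2}). The initial-data piece $t^{\rho-1}E_\rho(it^\rho A)\varphi$ is controlled, after multiplication by $t^{1-\rho}$, by $||\varphi||_{C^a[0,\pi]}$ via the analog of (\ref{E}) for $E_\rho(it^\rho A)$; and since $\varphi(0)=\varphi(\pi)=0$, a double integration gives $||\varphi||_{C^a[0,\pi]}\leq C||\varphi_{xx}||_{C^a[0,\pi]}$. The $\psi$ and $f$ pieces of the free term are handled by Corollary \ref{g1g2} and Lemma \ref{EAintL}, producing control by $||t^{1-\rho}\psi||_{C[0,T]}$ and $||t^{1-\rho}f||_{C_x^a(\overline{\Omega})}$. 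For the feedback integral, applying (\ref{E}) pointwise and $|B[\omega(\cdot,s)]|\leq b||\omega(\cdot,s)||_{C[0,\pi]}$, and then rewriting $||\omega(\cdot,s)||_{C[0,\pi]} = s^{\rho-1}\cdot[s^{1-\rho}||\omega(\cdot,s)||_{C[0,\pi]}]$, puts the feedback into the form $\int_0^t(t-s)^{\rho-1}s^{\rho-1}h(s)\,ds$ required by the fractional Gronwall lemma of Section~2; after multiplication by $t^{1-\rho}$ and the crude bound $t^{1-\rho}\leq T^{1-\rho}$, inequality (\ref{Gron}) yields the desired estimate of $||t^{1-\rho}\omega||_{C(\overline{\Omega})}$ by the data.

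Next, I would estimate $||t^{1-\rho}\omega_{xx}||_{C(\overline{\Omega})}$ by differentiating (\ref{VE2}) twice in $x$. The $\varphi$-contribution becomes $t^{\rho-1}E_\rho(it^\rho A)\varphi_{xx}$, using the Fourier identity $\partial_x^2 E_\rho(it^\rho A)\varphi = E_\rho(it^\rho A)\varphi_{xx}$, which holds precisely because $\varphi\in C_2^a[0,\pi]$; the analog of (\ref{E}) then bounds its $t^{1-\rho}$-weighted norm by $||\varphi_{xx}||_{C^a[0,\pi]}$. The $\psi$ and $f$ pieces are exactly what is estimated in (\ref{st1}), while the $B[\omega]$-contribution is estimated in (\ref{st2}) in terms of $||t^{1-\rho}\omega||_{C(\overline{\Omega})}$, which is controlled by the previous step. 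Finally, (\ref{mu}) and boundedness of $B$ give $||t^{1-\rho}\mu||_{C[0,T]}\leq C_B\bigl[||t^{1-\rho}\psi||_{C[0,T]}+||t^{1-\rho}\omega||_{C(\overline{\Omega})}\bigr]$; substituting the three pieces into $t^{1-\rho}\partial_t^\rho\omega = -it^{1-\rho}\omega_{xx} - t^{1-\rho}\mu\cdot q'' - it^{1-\rho}f$ and taking $C(\overline{\Omega})$-norms yields (\ref{se}).

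The main obstacle is the first step: one must align the nonlinear feedback through $B[\omega]$ with the exact kernel $(t-s)^{\rho-1}s^{\rho-1}$ demanded by the fractional Gronwall inequality of Section~2, which is done by the reparametrization $||\omega(\cdot,s)|| = s^{\rho-1}[s^{1-\rho}||\omega(\cdot,s)||]$ combined with $t^{1-\rho}\leq T^{1-\rho}$. Once Step~1 is in place, the subsequent bounds on $\omega_{xx}$, $\mu$, and $\partial_t^\rho\omega$ are routine applications of the estimates (\ref{st1}), (\ref{st2}) and (\ref{mu}) that already appeared in the existence proof.
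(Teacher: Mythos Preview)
Your proposal is correct and follows essentially the same route as the paper: bound $\|t^{1-\rho}\omega\|$ from the Volterra equation (\ref{VE2}) via the fractional Gronwall lemma, feed this into (\ref{st1})--(\ref{st2}) together with (\ref{AEA}) to bound $\|t^{1-\rho}\omega_{xx}\|$, estimate $\mu$ through (\ref{mu}), and finally read off $\partial_t^\rho\omega$ from the equation. The only cosmetic difference is the order of presentation (the paper sets up the $\omega_{xx}$ bound in terms of $\|t^{1-\rho}\omega\|$ first and then closes the loop), and you are slightly more explicit about the step $t^{1-\rho}\leq T^{1-\rho}$ needed to match the Gronwall kernel.
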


\begin{proof}Let us begin the proof of the inequality (\ref{se}) by establishing an estimate for $\omega_{xx}(x,t)$ and then use it with equation (\ref{prob2}). To this end we have from (\ref{AEA})
    \[
    \bigg|\bigg|\frac{\partial^2}{\partial x^2} E_\rho (i t^\rho A)\varphi\bigg|\bigg|_{C(\overline{\Omega})}\leq C ||\varphi_{xx}||_{C^a[0,\pi]}.
    \]
This estimate together with (\ref{st1}) implies
\[
||t^{1-\rho}F_{xx}(x,t)||_{C(\overline{\Omega})}\leq C ||\varphi_{xx}||_{C^a[0,\pi]}+ C_{a, q, B}\big[ ||t^{1-\rho}\psi||_{C[0, T]}+||t^{1-\rho}f(x,t)||_{C^a_x(\overline{\Omega})}\big].
\]
Then, using equality (\ref{VE2}) and inequality (\ref{st2}), we get
\begin{equation}\label{Aomega}
    ||t^{1-\rho}\omega_{xx}(x,t)||_{C(\overline{\Omega})}\leq C ||\varphi_{xx}||_{C^a[0,\pi]}+ C_{a, q, B}\big[ ||t^{1-\rho}\psi||_{C[0, T]}+||t^{1-\rho}f(x,t)||_{C^a_x(\overline{\Omega})} + ||t^{1-\rho}\omega(x,t)||_{C(\overline{\Omega})}\big].
    \end{equation}
As a result, we have obtained an estimate for $\omega_{xx}(x,t)$ through $\omega(x,t)$.
To estimate $||t^{1-\rho}\omega(x,t)||_{C(\overline{\Omega})}$, we will proceed as follows. Apply estimates (\ref{E}) and (\ref{EAint}) to get
\[
||t^{1-\rho} F(x,t)||_{C(\overline{\Omega})}\leq ||\varphi||_{C^a[0,\pi]}+\frac{T^\rho}{\rho}\big[ C_{q, B}||q''||_{C^a[0,\pi]}||t^{1-\rho} \psi||_{C[
    0,T]} +||t^{1-\rho} f||_{C^a_x(\overline{\Omega})}\big]
\]
Again by estimate (\ref{E}) we have
\[
\bigg|\bigg|t^{1-\rho}\int\limits_0^t (t-s)^{\rho-1}  E_{\rho,\rho} (i(t-s)^\rho A)  \frac{ q''(x)}{B[ q(x)]} B[\omega(\cdot,s)] ds\bigg|\bigg|_{C[0,\pi]}\leq
\]
\[ C_{q, B} ||q''||_{C^a[0,\pi]}\int\limits_0^t (t-s)^{\rho-1}||\omega(x,s)||_{C[0,\pi]}.
\]
Therefore, from equation (\ref{VE2}) we obtain an estimate
\[
    ||t^{1-\rho}\omega(x,t)||_{C[0,\pi]}\leq ||\varphi||_{C^a[0,\pi]}+C_{q,\rho, B} [||t^{1-\rho}\psi||_{C[0,T]}+||t^{1-\rho}f||_{C^a_x(\overline{\Omega})}]+
    \]
    \[
    C_{q,B}\int\limits_0^t (t-s)^{\rho-1}s^{\rho-1}||s^{1-\rho}\omega(x,s)||_{C[0,\pi]}ds
\]
for all $t\in [0, T]$. Finally, the Gronwall inequality (\ref{Gron}) implies
\[
    ||t^{1-\rho}\omega(x,t)||_{C(\overline{\Omega})}\leq C_{q, \rho, B}\big[||\varphi||_{C^a[0,\pi]}+||t^{1-\rho}\psi||_{C[0,T]}+||t^{1-\rho}f||_{C^a_x(\overline{\Omega})}\big].
\]

We substitute this estimate in (\ref{Aomega}) and apply $||\varphi||_{C^a[0,\pi]}\leq C ||\varphi_{xx}||_{C^a[0,\pi]}$ to get
\[
    ||t^{1-\rho} \omega_{xx}||_{C(\overline{\Omega})}\leq C_{\rho, q, B} \big[  ||\varphi_{xx}||_{C^a[0,\pi]}  + ||t^{1-\rho}\psi||_{C[0,T]} + ||t^{1-\rho}f||_{C^a_x(\overline{\Omega})}\big],
\]
To obtain  estimate (\ref{se}), it remains to note that
\[
\partial_t^\rho \omega(x,t) = \omega_{xx}(x,t) - i \mu(t)  q''(x) + f(x,t)
\]
and use the estimate
\[
||t^{1-\rho}\mu||_{C[0, T]}\leq C_{q,B}\big[  ||t^{1-\rho}\psi||_{C[0, T]} + ||t^{1-\rho} \omega||_{C(\overline{\Omega})}\big],
\]
which follows from definition (\ref{mu}) and the conditions of Theorem \ref{main}.
\end{proof}

\textbf{Proof of Theorem \ref{estimate}.}

Apply (\ref{p}) to get
\[
||t^{1-\rho}p(t)||_{C[0, T]}\leq C_{q, B}\big[ ||t^{1-\rho}\partial_t^\rho \omega||_{C(\overline{\Omega})}+||t^{1-\rho}\partial_t^\rho \psi||_{C[0, T]}\big].
\]
Equations (\ref{u}) and (\ref{prob2a}) imply
\[
\partial^\rho_t u(x,t)=\partial^\rho_t \omega(x,t)+p(t) q(x).
\]
Hence, from estimates of $\partial^\rho_t \omega(x,t)$ and $p(t)$, we obtain an estimate for $\partial^\rho_t u(x,t)$. On the other hand, by virtue of equation (\ref{prob1}), we will have
\[
-u_{xx}(x,t)=-i \partial_t^\rho u(x,t) + p(t) q(x) +f(x,t).
\]

Now, to establish estimate (\ref{se}), it suffices to use the statement of Theorem \ref{estimate2}.

\section{Acknowledgement}
The authors are grateful to A. O. Ashyralyev for posing the
problem and they convey thanks to Sh. A. Alimov for discussions of
these results.
The authors acknowledge financial support from the  Ministry of Innovative Development of the Republic of Uzbekistan, Grant No F-FA-2021-424.


\begin{thebibliography}{99}

\bibitem{Pskhu}  A.~V.~Pskhu,\emph{ Uravneniya v chastnykh proizvodnykh drobnogo poryadka (Fractional Partial Differential Equations)}, Moscow: Nauka, 2005.

\bibitem{SU}  S.~Umarov, \emph{Introduction to Fractional and Pseudo-Differential Equations with Singular Symbols,} Springer, 2015.

\bibitem{AOLob}  R.~Ashurov, O.~Muhiddinova, \textquotedblleft Initial-boundary value problem for a time-fractional subdiffusion equation with an arbitrary elliptic differential operator,\textquotedblright  Lobachevskii Journal of Mathematics {42}(3), 517--525 (2021).


\bibitem{Ashyr1} A.~Ashyralyev, M.~Urun,\textquotedblleft Time-dependent source identification problem for the Schr\"odinger equation with nonlocal boundary conditions,\textquotedblright  In: AIP Conf. Proc
     2183 , Art. 070016  (2019).
     
\bibitem{Ashyr2} A.~Ashyralyev, M.~Urun,\textquotedblleft On the Crank-Nicolson difference scheme for the time-dependent source identification problem,\textquotedblright  Bulletin of the Karaganda university. Mathemat-ics {102}(2), 35--44,(2021).

\bibitem{Ashyr3} A.~Ashyralyev, M.~Urun, \textquotedblleft Time-dependent source identication Schr\"odinger type problem, \textquotedblright International Journal of Applied Mathematics {34} (2),  297--310 (2021).
\bibitem{Hand1}    Y.~Liu,   Z.~Li, M.~Yamamoto,  \textquotedblleft Inverse problems of determining sources of the fractional partial differential equations, \textquotedblright Handbook of Fractional Calculus with Applications V. 2, J.A.T. Marchado Ed. DeGruyter,{2019}; 411-430.

\bibitem{Kab1} S.~I.~Kabanikhin,\emph{Inverse and Ill-Posed Problems. Theory and Applications,}
(De Gruyter ,2011).

\bibitem{prilepko}  A.~I.~Prilepko,D.~G.~Orlovsky,I.~A.~Vasin,\emph{Methods for Solving Inverse Problems in Mathematical Physics,} Marcel Dekkers, New York,{2000}.

\bibitem {Yama11}  K.~Sakamoto and M.~Yamamoto,\textquotedblleft Initial value boundary value problems for fractional diffusion-wave equations and applications to some inverse problems,\textquotedblright 
J. Math. Anal. Appl.{382}, 426-447 (2011).
	
	\bibitem{Niu} P.~Niu,   T.~Helin, Z.~Zhang, \textquotedblleft An inverse random source problem in a stochastic fractional diffusion equation.
\textquotedblleft Inverse Problems, {2020}, V. 36, No 4,  Art. 045002.

\bibitem{MS} Marian Slodichka. \textquotedblleft Uniqueness for an inverse source problem of determining a space-dependent source in a non-autonomous time-fractional diffusion equation,\textquotedblright Frac.Calculus and  Appl. Anal. {2020}, V.~23, N 6. P.~1703-1711., DOI: 10.1515/fca-2020-0084


\bibitem{MS1} M.~Slodichka,  K.~Sishskova,   V.~Bockstal, \textquotedblleft Uniqueness for an inverse source problem of determining a space dependent source in a time-fractional diffusion equation,\textquotedblright Appl. Math. Letters, {2019}, 91, 15-21.

\bibitem{12}  Y.~Zhang,  X.~Xu, Inverse scource problem for a fractional differential equations, Inverse Prob. {2011}, V.~27. N~3. P.~31-42.

\bibitem{14}  M.~Ismailov,  I.~M.~Cicek,  \textquotedblleft Inverse source problem for a time-fractional diffusion equation with nonlocal boundary conditions,\textquotedblright Applied Mathematical Modelling. {2016}, V.~40. P.~4891-4899.

\bibitem{15}  M.~Kirane,  A.~S.~Malik, \textquotedblleft Determination of an unknown source term and the temperature distribution for the linear heat equation involving fractional derivative in time, \textquotedblright Applied Mathematics and Computation, {2011}, V.~218. P.~163-170.

\bibitem{16}   M.~Kirane, B.~Samet,  B.~T.~Torebek,   \textquotedblleft Determination of an unknown source term and the temperature distribution for the subdiffusion equation at the initial and final data,\textquotedblright Electronic Journal of Differential Equations,{2017}, V.~217. P.~1-13.

\bibitem{17}  H.~T.~Nguyen,   D.~L.~Le,  V.~T.~Nguyen,   \textquotedblleft Regularized solution of an inverse source problem for a time fractional diffusion equation, \textquotedblright Applied Mathematical Modelling, {2016}, V.~40. P.~8244-8264.


\bibitem{Tor}  B.~T.~Torebek , R.~Tapdigoglu, \textquotedblleft Some inverse problems for the nonlocal heat equation with Caputo fractional derivative,
\textquotedblright  Mathematical Methods in Applied Sciences,{40}, 6468--6479 (2017).

\bibitem{AF1}  R.~Ashurov,  Yu.~Fayziev, \textquotedblleft Determination of fractional order and source term in a fractional subdiffusion equation,\textquotedblright  https://www.researchgate.net/publication/354997348

\bibitem{20}  Z.~Li,   Y.~Liu, M.~Yamamoto, \textquotedblleft Initial-boundary value problem for multi-term time-fractional diffusion equation with positive constant coefficients,\textquotedblright Applied Mathematica and Computation,{2015}, V.~257. P.~381-397.

\bibitem{21}  W.~Rundell,  Z.~Zhang, \textquotedblleft Recovering an unknown source in a fractional diffusion problem, \textquotedblright Journal of Computational Physics, {2018}, V.~368. P.~299-314.

\bibitem{22}  N.~A.~Asl,  D.~Rostamy,\textquotedblleft Identifying an unknown time-dependent boundary source ib time-fractional diffusion equation with a non-local boundary condition, \textquotedblright Journal of Computational and Applied Mathematics, {2019}, V.~335. P.~36-50.

\bibitem{23}  L.~Sun,  Y.~Zhang,  T.~Wei, \textquotedblleft Recovering the time-dependent potential function in a multi-term time-fractional diffusion equation\textquotedblright Applied Numerical Mathematics, {2019}, V.~135. P.~228-245.

\bibitem{24}  S.~A.~Malik,  S.~Aziz, \textquotedblleft An inverse source problem for a two parameter anomalous diffusion equation with nonlocal boundary conditions,\textquotedblright Computers and Mathematics with applications, {2017}, V.~3. P.~7-19.

\bibitem{25}  M.~Ruzhansky,  N.~Tokmagambetov,  B.~T.~Torebek,\textquotedblleft Inverse source problems for positive operators. I: Hypoelliptic diffusion and subdiffusion equations\textquotedblright J. Inverse Ill-Possed Probl,{2019}, V.~27. P.~891-911.

\bibitem{AODif}  R.~Ashurov,   O.~Muhiddinova, \textquotedblleft Inverse problem of determining the heat source density for the sub\-diffusion equation, \textquotedblright
Differential equations,{2020}, Vol. 56, No. 12, pp. 1550-1563.
\bibitem{13}  K.~M.~Furati, O.~S.~Iyiola,  M.~Kirane,   \textquotedblleft An inverse problem for a generalized fractional diffusion, \textquotedblright Applied Mathematics and Computation. {2014}, V.~249. P.~24-31.
\bibitem{KirSal}   M.~Kirane,  A.~M.~Salman, 
A.~Mohammed Al-Gwaiz.\textquotedblleft An inverse source problem for a two dimensional time fractional diffusion equation with nonlocal boundary conditions, \textquotedblright Math. Meth. Appl. Sci,{2012}, DOI: 10.1002/mma.2661

\bibitem{Ali}  A.~Muhammad,A.~M.~Salman,\textquotedblleft An inverse problem for a family of time fractional diffusion equations, \textquotedblright Inverse Problems in Science and Engineering, {2016}, 25:9, P.~1299-1322, DOI: 10.1080/17415977.2016.1255738

\bibitem{Saima} Zh.~Shuang, R.~Saima,R.~Asia,K.~Khadija,  M.~A.~Abdullah \textquotedblleft Initial boundary value problems for a multi-term time fractional diffusion equation with generalized fractional derivatives in time\textquotedblright AIMS Mathematics, {2021}, V.~ 6, N 11: 12114-12132. doi: 10.3934/math.2021703

	\bibitem{AFFF}  R.~Ashurov, Y.~Fayziev, \textquotedblleft On the Nonlocal Problems in Time for Time-Fractional Subdiffusion Equations,\textquotedblright Fractal Fract. 2022, 6, 41. https://doi.org/10.3390/ fractalfract6010041.

\bibitem{AF2} R.~Ashurov,  Yu.~Fayziev,\textquotedblleft  Uniqueness and existence for inverse problem of determining an order of time-fractional derivative of subdiffusion equation,\textquotedblright Lobachevskii journal of mathematics,{2021}, V 42, N 3, 508-516.

\bibitem{AF3} R.~Ashurov,  Yu.~Fayziev, \textquotedblleft Inverse problem for determining the order of the fractional derivative in the wave equation,\textquotedblright Mathematical Notes,{2021},  110:6,  842-852


\bibitem{Dzh66} M.~M.~Dzherbashian [=Djrbashian], \emph{Integral Transforms and Representation of Functions in the Complex Domain, }(M.NAUKA, in Russian, 1966.

\bibitem{Gor} R.~Gorenflo, A.~A.~Kilbas, F.~Mainardi, S.~V.~Rogozin, \emph{Mittag-Leffler functions, related topics and applications}, (Springer, 2014).

\bibitem{AshCab}  R.~Ashurov, A.~Cabada and B.~Turmetov, \textquotedblleft Operator method for construction of solutions of linear
fractional differential equations with constant coefficients, \textquotedblright  Fractional Calculus Appl. Anal,{1}, 229-252, 2016.

\bibitem{AshCab2}  R.~R.~Ashurov,Yu.~E.~Fayziev, \textquotedblleft On construction of solutions of linear fractional differentional
equations with constant coefficients and the fractional derivatives,\textquotedblright
 Uzb.Math.Journ. 2017. 3. P. 3-21, [in Russian].
\bibitem{Zyg}  A.~Zygmund, \emph{Trigonometric series, V. 1}, (Cambridge, 1959).

\bibitem{Kil} A.~A.~Kilbas , H.~M.~Srivastava , J.~J.~Trujillo, \emph{Theory and applications of fractional differential equations}, (Elsevier, North-Holland, Mathematics studies, 2006).

\end{thebibliography}
\end{document}